\documentclass[reqno]{amsart}
\textwidth=16.00cm 
\textheight=22.00cm 
\topmargin=0.00cm
\oddsidemargin=0.00cm 
\evensidemargin=0.00cm 
\headheight=0cm 
\headsep=0.5cm
\textheight=610pt
\usepackage{graphicx}
\usepackage{trajan}
\usepackage{multirow}
\usepackage{latexsym,array,delarray,amsthm,amssymb,epsfig}
\usepackage{amsmath}
\usepackage{yhmath}
\usepackage{mathdots}
\usepackage{MnSymbol}
\usepackage{tikz}
\usepackage{adjustbox}
\usepackage{rotating}
\usepackage{longtable}
\usepackage{enumitem}
\usepackage{caption}
\usepackage{wasysym}
\usetikzlibrary{shapes.misc,shadows}
\theoremstyle{plain}
\newtheorem{thm}{Theorem}[section]
\newtheorem{lemma}[thm]{Lemma}

\newtheorem{cor}[thm]{Corollary}

\newtheorem*{thm*}{Theorem}
\newtheorem*{lemma*}{Lemma}
\newtheorem*{prop*}{Proposition}
\newtheorem*{cor*}{Corollary}
\newtheorem*{conj*}{Conjecture}

\theoremstyle{definition}

\newtheorem{ex}[thm]{Example}

\theoremstyle{remark}
\newtheorem*{rmk}{Remark}

\newcommand{\A}{\mathcal{A}}

\newcommand{\g}{\mathfrak{g}}

\begin{document}
\date{}

\title{Subinvariance in Leibniz Algebras}
\author{Kailash C. Misra, Ernie Stitzinger and Xingjian Yu}
\address{Department of Mathematics, North Carolina State University, Raleigh, NC 27695-8205}
\email{misra@ncsu.edu, stitz@ncsu.edu, xyu13@ncsu.edu }
\subjclass[2010]{17A32 , 17A60 }
\keywords{Leibniz Algebra, subinvariant subalgebras, radical and nilradical}
\thanks{KCM is partially supported by Simons Foundation grant \#  636482}

\begin{abstract}
Leibniz algebras are certain generalizations of Lie algebras. Motivated by the concept of subinvariance in group theory, Schenkman studied properties of subinvariant subalgebras of a Lie algebra. In this paper we define subinvariant subalgebras of Leibniz algebras and study their properties. It is shown that the signature results on subinvariance in Lie algebras have analogs for Leibniz algebras.

\end{abstract}

\maketitle
\bigskip
\section{Introduction}

Let $\A$ be a vector space over field $F$ equipped with a bilinear multiplication map $ \A \times \A \longrightarrow \A$ given by $(x, y) \longrightarrow xy$ for all $x, y \in \A$. For each $x \in \A$, define the left (resp. right) multiplication operator $L_x :\A \longrightarrow \A$ (resp. $R_x :\A \longrightarrow \A$ defined by $L_x(y) = xy$ (resp. $R_x(y) = yx$) for all $y \in \A$. The vector space $\A$ is a left (resp. right) Leibniz algebra if $L_x$ (resp. $R_x$) is a derivation for each $x \in \A$. Following Barnes \cite{B}, in this paper Leibniz algebras will always refer to finite dimensional left Leibniz algebras. Observe that a Lie algebra is a Leibniz algebra but not conversely. In particular the Leibniz algebra $\A$ has an abelian ideal ${\rm Leib}(\A)={\rm span}\{x^2= xx \mid x\in \A\}$. The ideal Leib$(\A)= \{0\}$ if and only if $\A$ is a Lie algebra. For a subalgebra $B$ of $\A$, the left (resp. right) centralizer of $B$ in $\A$ is defined by $Z^{\ell}_{\A}(B) = \{x \in \A \mid xa = 0 \ \ {\rm for} \, {\rm all} \ \ a \in B\}$ (resp. $Z^r_{\A}(B) = \{x \in \A \mid ax = 0 \ \  {\rm for} \, {\rm all} \ \ a \in B\}$). The cetralizer of $B$ in $\A$ is $Z_{\A}(B) = Z^{\ell}_{\A}(B) \cap Z^r_{\A}(B)$. The left (resp. right) center of $\A$ is $Z^{\ell}(\A) = Z^{\ell}_{\A}(\A)$. (resp. $Z^r(\A) = Z^r_{\A}(\A)$) and the center of $\A$ is $Z(\A) = Z^{\ell}(\A) \cap Z^r(\A)$. Note that ${\rm Leib}(\A) \subseteq Z^{\ell}(\A)$ and the center of $\A$ is $Z(\A) = Z^{\ell}(\A) \cap Z^r(\A)$. We denote $L(\A) = \{L_x \mid x \in\A\}$ which is a Lie algebra of linear transformations on $\A$. For a Leibniz algebra $\A$, we define the ideals $\A^{(1)} = \A = \A^{1}$, $\A^{(i)} = \A^{(i-1)} \A^{(i-1)}$ and $\A^{i} = \A\A^{i-1}$ for $i \in \mathbb{Z}_{\geq 2}$. The Leibniz algebra is said to be solvable (resp. nilpotent) if $\A^{(m)} = \{0\}$ (resp. $\A^{m} = \{0\}$) for some positive integer $m$. The maximal solvable (resp. nilpotent) ideal of $\A$ is called the radical (resp. nilradical) and denoted by rad$(\A)$ (resp. nilrad$(\A)$).

The concept of subinvariance was popularized for groups by Wielandt \cite{W} and for Lie algebras by Schenkman \cite{SE}. A subalgebra $B$ of the Leibniz algebra $\A$ is subinvariant in $\A$ if there exists a chain of subalgebras $B = B_t \subset B_{t-1} \subset \cdots \subset B_0 =A$ such that $B_i \vartriangleleft B_{i-1}$ for all $1 \leq i \leq t$. Such a series is called a normal series from $B$ to $A$. This normal series is a composition series from $B$ to $A$ if $B_i$ is a maximal ideal of $B_{i-1}$, $1 \leq i \leq t$ and in such case $B_{i-1}/B_i$ are called composition factors. As shown in  \cite{DMS}, the Leibniz algebra $\A$ is nilpotent if and only if all subalgebras are subinvariant in $\A$. In this paper we show that most important results on subinvariance of Lie algebras in \cite{SE} has Leibniz algebra analogs although techniques to obtain these results are different. 

We organize this paper as follows. In Section 2, we give results on subinvariant subalgebras of Leibniz algebras over any field $F$. We define a function $f$ on a pair of subsets of a Leibniz algebra $\A$ and show that it has certain nice property which is used to prove several results through out the paper. Let $B$ be a subinvariant subalgebra of $\A$ and $B^{\omega}$ be the smallest member in the lower central series of $B$, then it is shown that $B^{\omega}$ is an ideal of $\A$. Also if $Z_{\A}(B) = \{0\}$, we show that $Z_{\A}(B^{\omega}) \subseteq B^{\omega}$. In Section 3, we collect the results on subinvariant subalgebras of Leibniz algebras over field $F$ of characteristic zero and show that the important results in this section fail over field of positive characteristic. We show that nilpotent (resp. solvable) subinvariant subalgebra of $\A$ is contained in $nilrad(\A)$ (resp. $rad(\A)$) and for any subinvariant subalgebra $B$ of $\A$, we show that $nilrad(B) = B \cap nilrad(\A)$ and 
$rad(B) = B \cap rad(\A)$. Furthermore, it is shown that the subinvariant subalgebras of $\A$ contained in a subalgebra $B$ form a lattice under algebra generation and intersection, the latter of which is obvious. This lattice contains a unique maximal element which is an ideal in $B$, but not necessarily in $\A$. In particular, the collection of all nilpotent subinvariant subalgebras of $\A$ contained in $B$ is a sublattice with a unique maximal element. Finally in the last section we remark that under the assumption $Z^{\ell}(\A) = \{0\}$, the tower theorem for $\A$ follows trivially from the tower theorem for Lie algebra in \cite{SE}.

\bigskip
\section{General Results}
In this section we give some useful basic results on properties of subinvariant subalgebras of a Leibniz algebra $\A$ over field $F$. For a pair of subsets $B$ and $C$ of $\A$, we denote $f(B,C)$ to be the subalgebra of $\A$ containing $BC + CB$. We will denote $f(B,C) = BC + CB = f^1(B, C)$. For all $n \in \mathbb{Z}_{\geq 1}$, we define $f^{n+1}(B, C) = f(B, f^n(B, C))$ inductively. The following result on this function $f$ will be useful later.

\begin{lemma}\label{function} 
Let $B$, $C$ be any two subsets of the Leibniz algebra $\A$. Then $f(B^n, C) \subseteq f^n(B, C)$ for all $n \in \mathbb{Z}_{\geq 1}$.

\end{lemma}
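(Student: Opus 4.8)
The plan is to induct on $n$, proving the inclusion simultaneously for all subsets $C$ (with $B$ fixed). The base case $n=1$ is immediate, since $f(B^1,C)=f(B,C)=f^1(B,C)$ by definition. The only structural input needed is the left Leibniz identity: because each $L_x$ is a derivation, $x(yz)=(xy)z+y(xz)$, which I would use in the rearranged form $(xy)z = x(yz)-y(xz)$.

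Before the inductive step I would record the elementary identity $f^{n-1}\!\bigl(B,f(B,C)\bigr)=f^{n}(B,C)$ for $n\geq 2$. This follows by a short induction from the recursion $f^{k+1}(B,D)=f\bigl(B,f^{k}(B,D)\bigr)$: the case $k=1$ is the definition of $f^2$, and $f^{k+1}\bigl(B,f(B,C)\bigr)=f\bigl(B,f^{k}(B,f(B,C))\bigr)=f\bigl(B,f^{k+1}(B,C)\bigr)=f^{k+2}(B,C)$. Its purpose is to guarantee that the terms produced by the Leibniz identity land back inside $f^{n}(B,C)$ rather than in some larger set.

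For the inductive step, assume $f(B^{n-1},C')\subseteq f^{n-1}(B,C')$ for every subset $C'$. Writing $B^{n}=B\,B^{n-1}$, expand $f(B^{n},C)=B^{n}C+CB^{n}$. A generator of $B^{n}C$ has the form $(by)c$ with $b\in B$, $y\in B^{n-1}$, $c\in C$, and $(by)c=b(yc)-y(bc)$. Here $yc\in B^{n-1}C\subseteq f(B^{n-1},C)\subseteq f^{n-1}(B,C)$ by the inductive hypothesis, so $b(yc)\in B\,f^{n-1}(B,C)\subseteq f\bigl(B,f^{n-1}(B,C)\bigr)=f^{n}(B,C)$; and $bc\in BC\subseteq f^1(B,C)$, so $y(bc)\in B^{n-1}f^1(B,C)\subseteq f\bigl(B^{n-1},f(B,C)\bigr)\subseteq f^{n-1}\bigl(B,f(B,C)\bigr)=f^{n}(B,C)$, the last two steps using the inductive hypothesis with $C'=f(B,C)$ together with the auxiliary identity. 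A generator of $CB^{n}$ has the form $c(by)=(cb)y+b(cy)$, handled symmetrically: $(cb)y\in f^1(B,C)\,B^{n-1}\subseteq f\bigl(B^{n-1},f(B,C)\bigr)\subseteq f^{n}(B,C)$, and $b(cy)\in B\,f(B^{n-1},C)\subseteq B\,f^{n-1}(B,C)\subseteq f^{n}(B,C)$. Taking spans gives $f(B^{n},C)\subseteq f^{n}(B,C)$, completing the induction.

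The content is essentially just the Leibniz identity plus careful index bookkeeping, so I do not expect a substantive obstacle. The one point that needs care is organizing the induction so that the hypothesis may legitimately be applied with $C$ replaced by the subset $f(B,C)$ (hence the ``for all $C'$'' formulation), and verifying the auxiliary identity $f^{n-1}\bigl(B,f(B,C)\bigr)=f^{n}(B,C)$, which is exactly what makes the second halves of the two Leibniz expansions fall back into $f^{n}(B,C)$.
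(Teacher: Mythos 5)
Your proof is correct and follows essentially the same route as the paper's: induction on $n$, expanding $(BB^{n})C$ and $C(BB^{n})$ via the left Leibniz identity, and applying the inductive hypothesis with $C$ replaced by $f(B,C)=BC+CB$. The only difference is that you explicitly state and verify the auxiliary identity $f^{n-1}\bigl(B,f(B,C)\bigr)=f^{n}(B,C)$ and the ``for all $C'$'' form of the hypothesis, both of which the paper uses implicitly.
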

\begin{proof} 
We prove by induction on $n$. Suppose $f(B^n, C) \subseteq f^n(B, C)$. Then $f(B^{n+1}, C) = B^{n+1}C + CB^{n+1} = (BB^n)C + C(BB^n) \subseteq B(B^nC) + B^n(BC) + (CB)B^n + B(CB^n) \subseteq Bf(B^n, C) + f(B^n, BC +CB) \subseteq Bf^n(B, C) + f^n(B, BC+CB) \subseteq f^{n+1}(B, C) + f^{n+1}(B, C) = f^{n+1}(B, C)$
\end{proof}

The following theorem is a Leibniz algebra analog of \cite[Theorem 1]{SE} and can be proved similarly.

\begin{thm}\label{basic}
Let $B, B'$ be subinvariant subalgebras of the Leibniz algebra $\A$ over field $F$, $C$ be a subinvariant subalgebra of $B$ and $K$ any subalgebra of $\A$.
Let $\phi : \A \longrightarrow \A'$ be a homomorphism from $\A$ to a Leibniz algebra $\A'$ over field $F$. Then the following statements hold.
\begin{enumerate}
\item $C$ is subinvariant in $\A$.
\item $B \cap B'$ is subinvariant in $\A$.
\item $K\cap B$ is subinvariant in $K$.
\item $\phi (B)$ is subinvariant in $\phi(\A)$.
\end{enumerate}

\end{thm}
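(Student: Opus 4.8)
The plan is to reduce all four statements to two elementary observations about a single ideal inclusion, and then to propagate these along the defining chains, gluing chains together by transitivity. Throughout one should keep in mind that, for a Leibniz algebra, ``$I$ is an ideal of $J$'', written $I \vartriangleleft J$, means $IJ + JI \subseteq I$, a two-sided condition.

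First I would record the two ``atomic'' facts. Suppose $I \vartriangleleft J$. (i) If $K$ is any subalgebra, then $K \cap I \vartriangleleft K \cap J$: for $x \in K \cap J$ and $y \in K \cap I$ one has $xy, yx \in K$ because $K$ is a subalgebra, and $xy, yx \in I$ because $I$ is an ideal of $J$, hence $xy, yx \in K \cap I$. (ii) If $\phi : \A \longrightarrow \A'$ is a homomorphism of Leibniz algebras, then $\phi(I) \vartriangleleft \phi(J)$: both are subalgebras of $\A'$, and for $\phi(x) \in \phi(J)$, $\phi(y) \in \phi(I)$ we have $\phi(x)\phi(y) = \phi(xy) \in \phi(I)$ and $\phi(y)\phi(x) = \phi(yx) \in \phi(I)$.

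With these in hand the four statements follow quickly. For (1) I would simply concatenate a normal series $C = C_s \vartriangleleft \cdots \vartriangleleft C_0 = B$ with a normal series $B = B_t \vartriangleleft \cdots \vartriangleleft B_0 = \A$; since $C_0 = B = B_t$, this is a normal series from $C$ to $\A$. For (3) I would intersect a normal series $B = B_t \vartriangleleft \cdots \vartriangleleft B_0 = \A$ with $K$ and apply (i) to each link, obtaining the normal series $K \cap B = K \cap B_t \vartriangleleft \cdots \vartriangleleft K \cap B_0 = K$. For (4) I would apply (ii) to each link of $B = B_t \vartriangleleft \cdots \vartriangleleft B_0 = \A$, obtaining $\phi(B) = \phi(B_t) \vartriangleleft \cdots \vartriangleleft \phi(B_0) = \phi(\A)$. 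Finally, for (2) I would combine (3) and (1): applying (3) with $K = B'$ shows $B \cap B'$ is subinvariant in $B'$, and since $B'$ is subinvariant in $\A$, statement (1) then gives that $B \cap B'$ is subinvariant in $\A$.

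I do not anticipate a genuine obstacle here; the argument mirrors the Lie algebra case of \cite[Theorem 1]{SE}, and the function $f$ of Lemma \ref{function} is not needed. The only points requiring a moment of care are that ``ideal'' in the Leibniz setting is two-sided, so in (i) and (ii) both products $xy$ and $yx$ (respectively $\phi(xy)$ and $\phi(yx)$) must be verified, and the harmless bookkeeping of deleting any repeated terms from the intersected chain $\{K \cap B_i\}$ or the image chain $\{\phi(B_i)\}$ so that the result literally matches the definition of a normal series.
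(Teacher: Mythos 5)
Your argument is correct and is exactly the standard chain-manipulation proof that the paper itself invokes by deferring to Schenkman's Theorem 1 (the paper gives no proof, stating only that it ``can be proved similarly''); your two atomic facts about intersecting and mapping a single ideal link, together with concatenation of normal series, are precisely what that argument amounts to, and you correctly handle the two-sided ideal condition in the Leibniz setting.
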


For a subalgebra $B$ of the Leibniz algebra $\A$, define $B^{\omega} = \bigcap_{i=1}^{\infty}B^{i}$ which is an ideal of $B$. 
The following result has been proved in \cite[Theorem 4.4]{B} using a different approach. As shown below it follows immediately
using the function $f$.

\begin{thm}\label{omega}
If $B$ is a subinvariant subalgebra of $\A$, then $B^{\omega}$ is an ideal of $\A$
\end{thm}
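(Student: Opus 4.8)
The plan is to estimate the iterated sets $f^m(B,\A)$ for large $m$ and feed the estimate into Lemma~\ref{function}. Fix a normal series $B = B_t \vartriangleleft B_{t-1} \vartriangleleft \cdots \vartriangleleft B_0 = \A$; we may assume $t\geq 1$, the case $t=0$ being trivial since $B^{\omega}$ is always an ideal of $B$. Because $\A$ is finite dimensional and $B^{i+1}=BB^i\subseteq B^i$, the chain $B = B^1\supseteq B^2\supseteq\cdots$ stabilizes, so there is an $N$ with $B^{\omega}=B^n$ for all $n\geq N$. Hence for any $m\geq N$ we have $\A B^{\omega}+B^{\omega}\A = \A B^m+B^m\A = f(B^m,\A)\subseteq f^m(B,\A)$ by Lemma~\ref{function}, and it suffices to prove $f^m(B,\A)\subseteq B^{\omega}$ for a suitably large $m$.

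First I would absorb the $t$ links of the series: by induction on $k$, $f^k(B,\A)\subseteq B_k$ for $1\leq k\leq t$. For $k=1$, $f(B,\A)=B\A+\A B\subseteq B_1\A+\A B_1\subseteq B_1$, using $B\subseteq B_1$ and $B_1\vartriangleleft\A$. Assuming $f^k(B,\A)\subseteq B_k$ with $k+1\leq t$, monotonicity of $f$ in its second argument (clear from $f(B,X)=BX+XB$) gives $f^{k+1}(B,\A)=f(B,f^k(B,\A))\subseteq f(B,B_k)=BB_k+B_kB\subseteq B_{k+1}B_k+B_kB_{k+1}\subseteq B_{k+1}$, using $B\subseteq B_{k+1}$ and $B_{k+1}\vartriangleleft B_k$. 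In particular $f^t(B,\A)\subseteq B_t=B$.

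Next I would drive the result down the lower central series of $B$. The routine multiplicativity estimate $B^iB^j\subseteq B^{i+j}$ holds for all $i,j\geq 1$: induct on $i$ using the left Leibniz identity $(ab)c=a(bc)-b(ac)$, noting that a spanning element of $B^{i+1}B^j=(BB^i)B^j$ has the form $(ab)c$ with $a\in B$, $b\in B^i$, $c\in B^j$, so that $a(bc)\in B\cdot B^{i+j}=B^{i+j+1}$ and $b(ac)\in B^i\cdot B^{j+1}\subseteq B^{i+j+1}$. Granting this, an induction on $j$ shows $f^{t+j}(B,\A)\subseteq B^{j+1}$: the base $j=0$ is $f^t(B,\A)\subseteq B=B^1$, and $f^{t+j+1}(B,\A)=f(B,f^{t+j}(B,\A))\subseteq f(B,B^{j+1})=BB^{j+1}+B^{j+1}B\subseteq B^{j+2}$. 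Hence $f^m(B,\A)\subseteq B^{m-t+1}$ for every $m\geq t$. Finally, choosing $m\geq t$ with $m-t+1\geq N$, we get $B^m=B^{\omega}$ and $B^{m-t+1}=B^{\omega}$, so $\A B^{\omega}+B^{\omega}\A=f(B^m,\A)\subseteq f^m(B,\A)\subseteq B^{m-t+1}=B^{\omega}$; that is, $B^{\omega}$ is an ideal of $\A$.

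The conceptual crux — and the step I would be most careful about — is the trade-off in the last paragraph: $\A$ need not absorb $B^{\omega}$ in a bounded number of multiplications, but $B^{\omega}$ equals $B^m$ for arbitrarily large $m$, and Lemma~\ref{function} converts $f(B^m,\A)$ into the $m$-fold iterate $f^m(B,\A)$. The first $t$ iterations push $\A$ through the subinvariant chain into $B$, and each of the remaining $m-t$ iterations moves one step further down the lower central series of $B$, landing in $B^{\omega}$ once $m$ is large enough. The only place a naive attempt can stall is the right-hand product $B^{j+1}B$ appearing in $f(B,B^{j+1})$, which is exactly what the estimate $B^iB^j\subseteq B^{i+j}$ controls.
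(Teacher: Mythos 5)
Your proof is correct and follows essentially the same route as the paper: push $\A$ through the normal series via the iterates $f^k(B,\A)$, then down the lower central series of $B$, and finish with Lemma~\ref{function} applied to $f(B^m,\A)$ for $m$ large enough that $B^m=B^{\omega}$. The only difference is that you explicitly justify the right-hand absorption $B^{j+1}B\subseteq B^{j+2}$ via the estimate $B^iB^j\subseteq B^{i+j}$, a point the paper passes over with ``by definition of the lower central series.''
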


\begin{proof} Since $\A$ is finite dimensional, so is $B$. Hence $B^{\omega} =B^{s}$ for some $s \in \mathbb{Z}_{\geq 1}$ in the lower central series.
Also since $B$ is subinvariant in $\A$ we have a normal series $B = B_t \vartriangleleft B_{t-1} \vartriangleleft \cdots \vartriangleleft B_1 \vartriangleleft B_0 =A$. So we have a normal series of length $s+t$:
$$
B^{\omega}=B_{s+t-1} \vartriangleleft B_{s+t-2} \vartriangleleft \cdots \vartriangleleft B_t \vartriangleleft B_{t-1} \vartriangleleft \cdots \vartriangleleft B_0 =A.
$$
where we denote $B^j = B_{j+t-1}, 1\leq j \leq s$. 
Observe that $f(B, \A) = B\A +\A B \subset B_1\A +\A B_1 \subseteq B_1$ since $B_1 \vartriangleleft \A$. Assume that $f^i (B, \A) \subseteq B_i$. Then 
$f^{i+1}(B, \A) = f(B, f^i(B, \A)) \subseteq f(B, B_i) = BB_i + B_iB$. For $0 \leq i \leq t-1$, $BB_i + B_iB \subseteq B_{i+1}B_i + B_iB_{i+1} \subseteq B_{i+1}$ since $B_{i+1} \vartriangleleft B_i$. For $t \leq i \leq s+t-2$, we have $BB_i + B_iB = B_{i+1}$ by definition of lower central series. Thus we have $f^{s+t-1}(B, \A) \subseteq B_{s+t-1} = B^{\omega}$.
Now since $B^{\omega} =B^{s} = B_{s+t-1}$, by Lemma \ref{function} we obtain $B^{\omega}\A + \A B^{\omega} = f(B^{\omega}, \A) = f(B^s, \A) 
= f(B^{s+t-1}, \A) \subset f^{s+t-1}(B, \A) \subseteq B^{\omega}$ implying that $B^{\omega}  \vartriangleleft \A$.

\end{proof}

\begin{cor}
If $B$ is a subinvariant subalgebra of $\A$ and $B^2 = B$, then $B$ is an ideal of $\A$.
\end{cor}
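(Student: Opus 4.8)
The plan is to reduce the statement immediately to Theorem~\ref{omega} by showing that the hypothesis $B^2 = B$ forces the lower central series of $B$ to be constant, so that $B^{\omega} = B$.

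First I would verify by induction on $i$ that $B^i = B$ for every $i \in \mathbb{Z}_{\geq 1}$. The base case $i = 1$ is the definition $B^1 = B$, and $i = 2$ is the hypothesis $B^2 = B$. For the inductive step, assuming $B^i = B$, we get $B^{i+1} = B B^i = B B = B^2 = B$. Hence every term of the lower central series of $B$ equals $B$, and therefore
$$
B^{\omega} = \bigcap_{i=1}^{\infty} B^i = B.
$$

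Next, since $B$ is subinvariant in $\A$, Theorem~\ref{omega} applies and yields that $B^{\omega}$ is an ideal of $\A$. Combining this with the identification $B^{\omega} = B$ from the previous step gives that $B \vartriangleleft \A$, which is the claim.

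There is essentially no obstacle here: the only point that requires a line of argument is the stabilization of the lower central series, and once $B^{\omega} = B$ is established the result is a direct corollary of Theorem~\ref{omega}. One could alternatively phrase the whole argument as a single remark, noting that idempotence $B^2 = B$ of a subinvariant subalgebra is exactly the condition under which $B$ coincides with its own $B^{\omega}$.
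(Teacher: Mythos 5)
Your argument is correct and is exactly the paper's proof: $B^2 = B$ forces $B^i = B$ for all $i$, so $B^{\omega} = B$, and Theorem~\ref{omega} then gives that $B$ is an ideal of $\A$. The paper states this in one line; your induction merely makes the stabilization explicit.
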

\begin{proof}
This follows from Theorem \ref{omega} since  $B^2 = B$ implies that $B^{\omega} = B$. 
\end{proof}

We recall that a subalgebra $H$ of a Leibniz algebra $\A$ is a Cartan subalgebra if $H$ is nilpotent and 
$N_{\A}(H) = \{x \in \A \mid xh, hx \in H \ \ {\rm for} \ \ {\rm all} \ \ h \in H\} = H$. The existence of a Cartan subalgebra of a finite dimensional Leibniz algebra $\A$ is shown in \cite{B}.

\begin{thm}\label{cartan}
Let $\A$ be a finite dimensional Leibniz algebra over field $F$. Then $\A = \A^{\omega} + H$ where $H$ is a Cartan subalgebra of $\A$.
\end{thm}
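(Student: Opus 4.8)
The plan is to realize $\A$ via the (generalized) Fitting decomposition relative to a Cartan subalgebra $H$, and then check that the ``weight-nonzero'' summand already sits inside $\A^{\omega}$. By \cite{B} a Cartan subalgebra $H$ of $\A$ exists; recall that $H$ is nilpotent and self-normalizing. Since $L_{xy}=L_xL_y-L_yL_x$ for all $x,y\in\A$, an easy induction shows that the $k$-th term of the lower central series of the Lie algebra $L(H)=\{L_h\mid h\in H\}$ is contained in $L(H^k)$, so $L(H)$ is a nilpotent Lie subalgebra of $\mathrm{End}(\A)$. The usual Fitting theory for a nilpotent Lie algebra of operators on the finite dimensional space $\A$ then gives $\A=\A_0\oplus\A_1$, a direct sum of $L(H)$-invariant subspaces, where $\A_0$ is the Fitting null component (the set of $v\in\A$ annihilated by some power of $L_h$ for every $h\in H$; this is well defined over $F$ even when $F$ is not algebraically closed, via the primary decomposition) and $\A_1$ is the complementary part, carrying only nonzero weights. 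By the theory of Cartan subalgebras of Leibniz algebras developed in \cite{B} one has $\A_0=H$; hence $\A=H+\A_1$, and it remains to show $\A_1\subseteq\A^{\omega}$.

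For this I would first establish $H\A_1=\A_1$. Since each $L_h$ is a derivation, $H\A_1$ is again $L(H)$-invariant, and $L(H)$ acts as zero on the quotient $\A_1/H\A_1$; but $\A_1$ was chosen so that $L(H)$ acts nilpotently on no nonzero subquotient of it (zero is not among its weights), so $\A_1/H\A_1=0$. Then a straightforward induction gives $\A_1\subseteq\A^i$ for every $i\ge 1$: the case $i=1$ is trivial, and if $\A_1\subseteq\A^i$ then $\A_1=H\A_1\subseteq\A\A^i=\A^{i+1}$. Therefore $\A_1\subseteq\bigcap_{i\ge 1}\A^i=\A^{\omega}$, and so $\A=H+\A_1\subseteq H+\A^{\omega}\subseteq\A$, i.e.\ $\A=\A^{\omega}+H$. (By Theorem \ref{omega} applied to the subinvariant subalgebra $\A$ itself, $\A^{\omega}$ is moreover an ideal of $\A$, although only the set equality is needed here.)

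The \emph{main obstacle} is the Leibniz-specific input folded into the first paragraph: that $L(H)$ is nilpotent, that the Fitting decomposition relative to $L(H)$ behaves just as for Lie algebras despite the asymmetry between left and right multiplication, and above all that $\A_0=H$ for a Cartan subalgebra $H$, which is where Engel's theorem for Leibniz algebras enters. All of this belongs to Barnes' development of Cartan subalgebras of Leibniz algebras in \cite{B}, so I would invoke \cite{B} for it; once it is granted, the inclusion $\A_1\subseteq\A^{\omega}$ is the only genuinely new step and is the short argument above. An alternative that avoids the Fitting decomposition is induction on $\dim\A$: the nilpotent case is immediate (then $\A^{\omega}=0$ and $H=\A$, since a nilpotent Leibniz algebra satisfies the normalizer condition), and otherwise one picks a minimal ideal $I\subseteq\A^{\omega}$, notes that $(H+I)/I$ is a Cartan subalgebra of $\A/I$, applies the inductive hypothesis to $\A/I$, and pulls back to get $\A=\A^{\omega}+I+H=\A^{\omega}+H$; there the analogous obstacle is showing that the homomorphic image of a Cartan subalgebra is again a Cartan subalgebra.
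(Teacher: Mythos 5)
Your proof is correct, but it takes a different route from the paper's. The paper argues in one step downstairs: since $\A^{\omega}=\A^{s}$ for some $s$, the quotient $\A/\A^{\omega}$ is nilpotent; the image $(H+\A^{\omega})/\A^{\omega}$ of a Cartan subalgebra under a surjection is again a Cartan subalgebra (Barnes \cite{B}), hence self-normalizing, and a proper subalgebra of a nilpotent Leibniz algebra is properly contained in its normalizer, so $(H+\A^{\omega})/\A^{\omega}=\A/\A^{\omega}$, i.e.\ $\A=\A^{\omega}+H$. This is essentially your ``alternative'' sketch at the end, except the paper quotients by $\A^{\omega}$ all at once rather than inducting on minimal ideals, and the Leibniz-specific input it borrows from \cite{B} is only that homomorphic images of Cartan subalgebras are Cartan. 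Your main argument instead works upstairs via the Fitting decomposition $\A=\A_{0}\oplus\A_{1}$ for the nilpotent Lie algebra $L(H)$, borrowing from \cite{B} the stronger characterization $\A_{0}=H$, and then proves the genuinely new inclusion $\A_{1}=H\A_{1}\subseteq\A^{\omega}$ by the clean induction $\A_{1}\subseteq\A^{i}$. The trade-off: the paper's proof is shorter and needs less machinery, while yours yields slightly more, namely a vector-space direct sum $\A=H\oplus\A_{1}$ with $\A_{1}\subseteq\A^{\omega}$ and $H\A_{1}=\A_{1}$, rather than just the sum $\A=\A^{\omega}+H$. You have correctly identified where the left/right asymmetry and Engel-type arguments must be delegated to Barnes, which is the same dependency the paper accepts.
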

\begin{proof}
Let $H$ be a Cartan subalgebra of $\A$. Then $(H+\A^{\omega})/\A^{\omega}$ is a Cartan subalgebra of the nilpotent Leibniz algebra $\A/\A^{\omega}$.
Since a proper subalgebra of a nilpotent Leibniz algebra is contained in its normalizer, we have $(H+\A^{\omega})/\A^{\omega} = \A/\A^{\omega}$. This implies that 
$H+ \A^{\omega} = \A$.
\end{proof}

The following result is an immediate consequence of Theorems \ref{omega} and \ref{cartan}.

\begin{cor}
Let $B$ be a subinvariant subalgebra of the finite dimensional Leibniz algebra $\A$. Then $B = B^{\omega} + H$ where $B^{\omega}$ is an ideal of $\A$ and 
$H$ is a Cartan subalgebra of $B$.
\end{cor}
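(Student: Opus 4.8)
The plan is simply to apply the two preceding theorems to the subalgebra $B$, viewed first as a Leibniz algebra in its own right and then as a subinvariant subalgebra of $\A$; no new argument is needed.

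First I would observe that $B$, being a subalgebra of the finite dimensional Leibniz algebra $\A$, is itself a finite dimensional Leibniz algebra over $F$. Hence Theorem \ref{cartan} applies verbatim with $B$ in place of $\A$: there is a Cartan subalgebra $H$ of $B$ such that $B = B^{\omega} + H$, where $B^{\omega} = \bigcap_{i \geq 1} B^{i}$ is the smallest term of the lower central series of $B$. (As noted in the proof of Theorem \ref{omega}, finite dimensionality ensures this intersection stabilizes, so that $B^{\omega} = B^{s}$ for some $s$, which is what is actually used in Theorem \ref{cartan}.)

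Next I would invoke Theorem \ref{omega}: since $B$ is assumed subinvariant in $\A$, the subspace $B^{\omega}$ is an ideal of $\A$. Putting the two facts together yields $B = B^{\omega} + H$ with $B^{\omega} \vartriangleleft \A$ and $H$ a Cartan subalgebra of $B$, which is precisely the assertion. The only thing worth pinning down — and it is immediate — is that the symbol $B^{\omega}$ means the same subspace in both invocations, namely $\bigcap_{i\geq 1} B^{i}$, so there is no compatibility issue and hence no real obstacle in the argument.
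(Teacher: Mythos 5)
Your argument is exactly the one the paper intends: apply Theorem \ref{cartan} to $B$ as a finite dimensional Leibniz algebra in its own right to get $B = B^{\omega} + H$, then apply Theorem \ref{omega} to conclude $B^{\omega} \vartriangleleft \A$. The paper states the corollary as an immediate consequence of those two theorems, and your write-up fills in the same (routine) details correctly.
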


The following is a Leibniz algebra analog of a well known Lie algebra result which we need to prove in the next theorem.
\begin{lemma}\label{ideal}
Let $\A$ be a nilpotent finite dimensional Leibniz algebra with center $Z(\A)$ and let $B$ be a nonzero ideal of $\A$. Then $B \cap Z(\A) \neq \{0\}$.
\end{lemma}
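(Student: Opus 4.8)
The plan is to mimic the classical Lie-algebra argument, which proceeds by induction on $\dim \A$ (or equivalently on the nilpotency class of $\A$), using the action of $\A$ on $B$ by left multiplication and exploiting that this action is by nilpotent operators. First I would dispose of the trivial case: if $\A$ itself is abelian (nilpotency class $1$), then $Z(\A) = \A \supseteq B$, so $B \cap Z(\A) = B \neq \{0\}$. For the inductive step, I would consider the last nonzero term $\A^m$ of the lower central series, so that $\A^{m+1} = \A \A^m = \{0\}$; note $\A^m \subseteq Z^\ell(\A)$ by definition, and more care is needed to land inside the full center $Z(\A) = Z^\ell(\A)\cap Z^r(\A)$.

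The key step is to use the left-multiplication action of $\A$ on the ideal $B$. Since $\A$ is nilpotent, each $L_x$ for $x \in \A$ acts nilpotently on $\A$, hence on the subspace $B$ (which is invariant since $B$ is an ideal: $\A B \subseteq B$). By Engel's theorem for Leibniz algebras (the linear version: a Lie algebra $L(\A)$ of nilpotent operators on a nonzero space $B$ has a common nonzero fixed vector), there exists $0 \neq b \in B$ with $L_x(b) = xb = 0$ for all $x \in \A$; that is, $b \in Z^\ell_\A(\A) \cap B = Z^\ell(\A)\cap B$. The remaining issue is to upgrade this to an element of the two-sided center. Here I would use the structural fact that in a Leibniz algebra the left center already behaves well: in fact $b \in Z^\ell(\A)$ means $xb = 0$ for all $x$, and one checks that the set $W = \{b \in B : xb = 0 \ \forall x \in \A\} = B \cap Z^\ell(\A)$ is itself a nonzero ideal of $\A$ (it is $\A$-invariant under right multiplication since $R_x$ is a derivation and $L_y$ kills $W$); then apply the same Engel argument to the action of $\A$ on $W$ by \emph{right} multiplication $R_x$, which is also nilpotent, obtaining $0 \neq z \in W$ with $R_x(z) = zx = 0$ for all $x$, so $z \in W \cap Z^r(\A) \subseteq B \cap Z(\A)$.

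The main obstacle I anticipate is verifying carefully that $W = B \cap Z^\ell(\A)$ is a nonzero ideal of $\A$ stable under all right multiplications, and that $R_x$ genuinely acts nilpotently on $W$ — this requires checking that the restriction of the right-multiplication operators to $W$ is a well-defined family of nilpotent transformations, which follows from nilpotency of $\A$ together with the Leibniz identity, but the bookkeeping between left and right actions is where sign-of-direction errors creep in. An alternative, perhaps cleaner, route avoids the left/right split: observe that $B \cap \A^m$ need not be nonzero, so instead pass to the quotient $\A/Z(\A)$, note $\dim(\A/Z(\A)) < \dim \A$ (as $Z(\A) \neq \{0\}$ for nilpotent $\A$, a fact that itself needs the Engel argument once), and if $B \subseteq Z(\A)$ we are done, while otherwise the image $\bar B$ is a nonzero ideal of the smaller nilpotent algebra $\A/Z(\A)$, so by induction $\bar B \cap Z(\A/Z(\A)) \neq \{0\}$, and a short argument lifts this to a nonzero element of $B \cap Z(\A)$. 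I would present whichever of these two is shorter in the final write-up, but I expect the induction-on-dimension version via the quotient by the center to be the least error-prone.
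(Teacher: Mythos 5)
Your main route contains a genuine handedness error. The paper works with \emph{left} Leibniz algebras, so $L_x$ is a derivation and $R_x$ is not; your justification that $W=\{b\in B \mid xb=0 \ \text{for all}\ x\in\A\}$ is stable under right multiplication ``since $R_x$ is a derivation'' therefore fails. Concretely, for $w\in W$ the left Leibniz identity gives $y(wx)=(yw)x+w(yx)=w(yx)$, and nothing forces $w(yx)=0$: knowing $\A w=\{0\}$ says nothing about $L_w$. (In the paper's notation your $W$ is $B\cap Z^{r}(\A)$, not $B\cap Z^{\ell}(\A)$, and the right center of a left Leibniz algebra need not be an ideal; it is the \emph{left} center $\{b \mid b\A=\{0\}\}$ that is always an ideal.) So your second Engel application is unjustified. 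Reversing the order does not rescue the two-step plan either: the family $\{R_x|_B\}$ by itself is not a Lie algebra of operators (one has $R_xR_y=-R_xL_y$), so an Engel argument for the right multiplications already forces you to adjoin the $L_x$'s. The paper sidesteps all of this by applying Engel's theorem \emph{once} to the combined family $\{L_a|_B,\,R_a|_B \mid a\in\A\}$ --- a weakly closed set of nilpotent transformations of $B$, since $[L_a,L_b]=L_{ab}$ and $[L_a,R_b]=R_{ab}$ --- which directly produces $0\neq b\in B$ with $ab=ba=0$ for all $a$, i.e.\ $b\in B\cap Z(\A)$. That simultaneous left-and-right Engel step is the missing idea.

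Your fallback route (induction on $\dim\A$ through $\A/Z(\A)$, lifting a central element of $\bar B$ by multiplying it into $Z(\A)$) is logically sound, but it only reduces the lemma to the base case $Z(\A)\neq\{0\}$ for nilpotent $\A$, which is exactly the same simultaneous two-sided Engel statement (the case $B=\A$). It relocates the difficulty rather than resolving it. If you supply the one-shot Engel argument for the weakly closed set above, either write-up then goes through; as written, neither does.
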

\begin{proof}
Since $\A$ is nilpotent its center $Z(\A) \neq \{0\}$. Also
$\A$ acts nilpotently on the ideal $B$ by both the left and right multiplications. Hence by Engle's theorem there is a  nonzero element $b \in B$ such that 
$ab = 0 =ba$ for all $a \in \A$. Hence $b \in B\cap Z(\A)$ and $B\cap Z(\A) \neq \{0\}$.
\end{proof}
\begin{thm}\label{center}
Let $\A$ be a finite dimensional Leibniz algebra. If the centralizer $Z_{\A}(\A^{\omega})$ is not contained in $\A^{\omega}$, then the center 
$Z(\A) \neq \{0\}$.  
\end{thm}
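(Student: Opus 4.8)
The plan is to locate a nonzero central element inside the centralizer $C:=Z_\A(\A^\omega)$ itself. Throughout I use that $\A/\A^\omega$ is nilpotent (it is finite dimensional with $\bigcap_i(\A/\A^\omega)^i=\{0\}$; this is already used in the proof of Theorem \ref{cartan}).

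First I would show that $C=Z_\A(\A^\omega)$ is a nilpotent ideal of $\A$. That $C$ is an ideal follows from the left Leibniz identity exactly as in the standard fact that the centralizer of an ideal is an ideal: for $x\in C$ and $y\in\A$ one expands $(xy)a$, $(yx)a$ and the right-hand analogues as combinations of $xa$, $x(ya)$, $x(ay)$, all of which vanish since $\A^\omega$ is an ideal and $x$ centralizes it. For nilpotency, compare lower central series: $C^i\subseteq\A^i$ for all $i$, so $C^\omega\subseteq\A^\omega$; since $C$ centralizes $\A^\omega\supseteq C^\omega$, each element of $C^\omega$ is killed on both sides by $C$, i.e. $C^\omega\subseteq Z(C)$. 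Hence $C/Z(C)$ is nilpotent (a quotient of $C/C^\omega$), and from $C^m\subseteq Z(C)$ one gets $C^{m+1}=C\,C^m\subseteq C\,Z(C)=\{0\}$, so $C$ is nilpotent. Consequently $Z(C)\neq\{0\}$, and $Z(C)$ is again an ideal of $\A$ by the same Leibniz-identity computation; moreover $\A^\omega$ annihilates $C$ on both sides, and $C$ annihilates $Z(C)$, so $\A$ acts on $Z(C)$ through the nilpotent quotient $\mathfrak q:=\A/(C+\A^\omega)$.

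Next I would extract an explicit element witnessing the hypothesis. Since $Z_\A(\A^\omega)\not\subseteq\A^\omega$, the image $\bar C=(C+\A^\omega)/\A^\omega$ is a nonzero ideal of the nilpotent algebra $\A/\A^\omega$, so by Lemma \ref{ideal} there is $0\neq\bar c\in\bar C\cap Z(\A/\A^\omega)$; lift it to $c\in C\setminus\A^\omega$ with $c\A+\A c\subseteq\A^\omega$. Combining this with $c\in C$ gives $c\A+\A c\subseteq C\cap\A^\omega=Z(\A^\omega)$, an abelian ideal of $\A$. It then follows readily that the ideal $Q$ of $\A$ generated by $c$ is $Q=Fc+P$, where $P$ is the ideal generated by $c\A+\A c$; since $P\subseteq Z(\A^\omega)$ we have $P^2=\{0\}$ and $cP=Pc=\{0\}$, whence $Q^2\subseteq P$, $Q^3=\{0\}$, and moreover $\A^\omega Q=Q\A^\omega=\{0\}$ and $\A Q+Q\A\subseteq P=Q\cap\A^\omega$. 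Thus $Q$ is a nonzero nilpotent ideal on which $\A^\omega$ acts trivially.

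Finally one must produce a nonzero element of $Z(\A)$, equivalently show that the $\mathfrak q$-module $Q$ (or $Z(C)$) has a nonzero invariant. The obstruction is that a nilpotent Leibniz algebra need not act nilpotently on a module, so Engel's theorem does not apply directly to $\mathfrak q$. I would get around this by choosing a minimal ideal $I$ of $\A$ contained in $P$ (equivalently in $Z(C)$): since $I$ lies inside the nilpotent ideal $C$ it is abelian, and $f(\A,I)=\A I+I\A$ is an ideal of $\A$ contained in $I$ (a Leibniz-identity computation in the spirit of Theorem \ref{omega}), so by minimality $f(\A,I)$ is $\{0\}$ or $I$. If $f(\A,I)=\{0\}$ then $I\subseteq Z(\A)$ and we are done. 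The main obstacle is to rule out $f(\A,I)=I$: in that case $I$ is a nonzero irreducible module over the nilpotent algebra $\mathfrak q$ on which $\mathfrak q$ acts nontrivially, and one must show this is incompatible with the existence of $c$ — whose image is central in $\A/\A^\omega$ and which, together with the square-zero structure of $Q$, should force the relevant $\mathfrak q$-module to be the trivial one. I expect this last case-elimination to be the genuinely hard step; everything preceding it is routine manipulation of the left Leibniz identity together with Lemma \ref{ideal}.
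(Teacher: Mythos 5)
You have correctly assembled a good deal of true structure: that $C=Z_{\A}(\A^{\omega})$ is an ideal of $\A$, that $C^{\omega}\subseteq\A^{\omega}$ forces $C^{\omega}\subseteq Z(C)$ and hence $C$ nilpotent, and the extraction via Lemma \ref{ideal} of an element $c\in C\setminus\A^{\omega}$ with $c\A+\A c\subseteq C\cap\A^{\omega}$, together with the three-step nilpotent ideal $Q=Fc+P$. All of these verifications check out. But the argument does not prove the theorem: as you yourself acknowledge, the final and essential step --- producing a nonzero element of $Z(\A)$ --- reduces to ruling out $\A I+I\A=I$ for a minimal ideal $I\subseteq P$, and you leave that case open. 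This is not a routine loose end; it is the entire content of the theorem. The action of the nilpotent quotient $\mathfrak{q}=\A/(C+\A^{\omega})$ on $I$ need not be nilpotent (a one-dimensional algebra can act by a nonzero scalar, giving an irreducible nontrivial module), and nothing you have established about $c$ or about the square-zero structure of $Q$ visibly excludes this. So the proposal has a genuine gap exactly where the difficulty lies.

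For comparison, the paper closes this gap with Cartan subalgebras rather than minimal ideals. Writing $\A=\A^{\omega}+H$ (Theorem \ref{cartan}) and applying Theorem \ref{cartan} again to $\A_1=C+H$ gives $\A_1=\A_1^{\omega}+H_1$ with $\A_1^{\omega}\subseteq C$, and the hypothesis $C\not\subseteq\A^{\omega}$ forces $C\cap H_1\neq\{0\}$. Now Lemma \ref{ideal} is applied \emph{inside} the nilpotent algebra $H_1$ to the ideal $C\cap H_1$, where Engel's theorem is legitimately available, producing $0\neq u\in C\cap Z(H_1)$. Since $\A=\A^{\omega}+H_1$, such a $u$ centralizes $\A^{\omega}$ (being in $C$) and centralizes $H_1$ (being in $Z(H_1)$), hence lies in $Z(\A)$. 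The role of the Cartan subalgebra is precisely to confine the potentially non-nilpotent part of the multiplication operators to $\A^{\omega}$, which is already centralized; this is the mechanism your module-theoretic approach lacks. If you want to complete your route, you would essentially have to reimport a Fitting/Cartan decomposition at the last step.
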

\begin{proof}
By Theorem \ref{cartan}, $\A = \A^{\omega} + H$ where $H$ is a Cartan subalgebra of $\A$. By definition, 
$C = Z_{\A}(\A^{\omega})$ is an ideal of $\A$. Consider the subalgebra $\A_1 = C + H$ of $\A$. Then by Theorem \ref{cartan} $\A_1 = \A_1^{\omega} +H_1$
where $ \A_1^{\omega}$ is an ideal of $\A$ and $H_1$ is a Cartan subalgebra of $\A_1$. Also $\A_1^{\omega} = (C+H)^{\omega} \subseteq C$ since $H$ is nilpotent.
\smallskip

If $C \subseteq \A_1^{\omega}$ then  $C \subseteq \A^{\omega}$ which is not the case by assumption. Hence $C \subsetneq \A_1^{\omega}$. Choose 
$c \in C \setminus \A_1^{\omega}$. Since $C \subset \A_1 =  \A_1^{\omega} +H_1$, $c = a + h$ for some $a \in \A_1^{\omega}, h \in H_1$. If $h = 0$ then 
$c = a \in \A_1^{\omega}$ which is a contradiction. Hence $0 \not= h = a -c \in C$ implying that $C \cap H_1 \not= \{0\}$.
\smallskip

Since $C$ is an ideal of $\A$, $C\cap H_1$ is an ideal of $H_1$. The center $Z_1 = Z(H_1) \neq \{0\}$ since $H_1$ is nilpotent. So by Lemma \ref{ideal}, we have $(C\cap H_1)\cap Z_1 \neq \{0\}$. Since $(C\cap H_1)\cap Z_1 \subseteq C\cap H_1 = Z_{\A}(\A^{\omega}) \cap Z(H_1)$, we have 
$Z_{\A}(\A^{\omega}) \cap Z(H_1) \neq \{0\}$. Furthermore, $\A = \A^{\omega} + H \subseteq \A^{\omega} +\A_1 = \A^{\omega} +\A_1^{\omega} +H_1 = \A^{\omega} +H_1$. Hence $Z_{\A}(\A^{\omega}) \cap Z(H_1) \subset Z(\A)$ and $Z(\A)  \neq \{0\}$. 
\end{proof}
\begin{thm}\label{sub-omega}
Let $B$ be a subinvariant subalgebra of $\A$. If $Z_{\A}(B) = \{0\}$, then $Z_{\A}(B^{\omega}) \subseteq B^{\omega}$.
\end{thm}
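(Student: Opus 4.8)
The plan is to argue by induction on the length $t$ of a subinvariance chain $B = B_t \vartriangleleft B_{t-1} \vartriangleleft \cdots \vartriangleleft B_0 = \A$. If $t = 0$ then $B = \A$, and since $Z(\A) = Z_{\A}(B) = \{0\}$ the desired inclusion $Z_{\A}(\A^{\omega}) \subseteq \A^{\omega}$ is exactly the contrapositive of Theorem \ref{center}. If $t \geq 1$, then $B$ is subinvariant in the ideal $B_1 \vartriangleleft \A$ via a chain of length $t-1$, and $Z_{B_1}(B) = Z_{\A}(B) \cap B_1 = \{0\}$, so applying the inductive hypothesis inside the finite dimensional Leibniz algebra $B_1$ gives $Z_{B_1}(B^{\omega}) \subseteq B^{\omega}$.

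Set $C = Z_{\A}(B^{\omega})$. As in the proof of Theorem \ref{center}, $C$ is an ideal of $\A$, and $B^{\omega}$ is an ideal of $\A$ by Theorem \ref{omega}. The previous step gives $C \cap B_1 = Z_{B_1}(B^{\omega}) \subseteq B^{\omega}$, and since $C$ and $B_1$ are ideals of $\A$ with $B \subseteq B_1$ we obtain $BC + CB \subseteq B_1 C + C B_1 \subseteq C \cap B_1 \subseteq B^{\omega}$. Now I would exploit that left multiplications are derivations: for $c, c' \in C$ and $b \in B$, writing $(cc')b = c(c'b) - c'(cb)$ and using $CB \subseteq B^{\omega}$ together with the fact that $C$ annihilates $B^{\omega}$ on both sides forces $(cc')b = 0$, and symmetrically $b(cc') = 0$. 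Hence $C^2 \subseteq Z^{\ell}_{\A}(B) \cap Z^{r}_{\A}(B) = Z_{\A}(B) = \{0\}$, so $C$ is abelian. A routine induction on $j$ (using the Leibniz identity and $B^{\omega} \vartriangleleft B$) then shows $CB^j \subseteq B^{\omega}$ for all $j \geq 1$; combined with $C^2 = \{0\}$ and $B^{\omega} \subseteq B^2$ this yields $(B+C)^j = B^j$ for all $j \geq 2$, and therefore $(B+C)^{\omega} = B^{\omega}$.

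To finish, suppose for contradiction that $C \not\subseteq B^{\omega}$. Apply Theorem \ref{center} to the finite dimensional Leibniz algebra $B+C$ (a subalgebra of $\A$, since $C \vartriangleleft \A$): its $\omega$-term is $B^{\omega}$, and $Z_{B+C}(B^{\omega}) = Z_{\A}(B^{\omega}) \cap (B+C) = C \not\subseteq B^{\omega} = (B+C)^{\omega}$, so $Z(B+C) \neq \{0\}$. But any nonzero $z \in Z(B+C)$ centralizes $B$, hence lies in $Z_{\A}(B) = \{0\}$, a contradiction. Therefore $C = Z_{\A}(B^{\omega}) \subseteq B^{\omega}$, which completes the induction.

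The conceptual heart of the argument is the reduction, via the inductive hypothesis, to the situation $BC + CB \subseteq B^{\omega}$, followed by the passage to the auxiliary subalgebra $B+C$ with $(B+C)^{\omega} = B^{\omega}$, after which Theorem \ref{center} and the hypothesis $Z_{\A}(B) = \{0\}$ close the argument immediately. I expect the main technical obstacle to be precisely the two computations in the second paragraph: deducing from $BC + CB \subseteq B^{\omega}$ that $C$ is abelian, and then verifying that the lower central series of $B+C$ collapses onto that of $B$.
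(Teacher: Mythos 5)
Your proof is correct, and it takes a genuinely different route from the paper's. Both arguments proceed by contradiction and both ultimately apply Theorem \ref{center} to an enlargement of $B$ whose $\omega$-term coincides with $B^{\omega}$, but they build and control that enlargement differently. The paper forms $K = Z_{\A}(B^{\omega}) + B$, uses subinvariance of $B$ in $K$ to extract a single element $z \in Z_{\A}(B^{\omega}) \setminus B$ normalizing $B$, adjoins just that element, and then needs a four-way case analysis on $z^2$ (invoking that ${\rm Leib}(\A)$ is abelian in one case) to verify that the enlarged subalgebra has the same $\omega$-term. You instead induct on the length of the normal series: the inductive hypothesis applied inside $B_1$ gives $Z_{\A}(B^{\omega}) \cap B_1 \subseteq B^{\omega}$, whence $BC + CB \subseteq B^{\omega}$ for $C = Z_{\A}(B^{\omega})$ because $C$ and $B_1$ are both ideals of $\A$; the Leibniz identity then yields $C^2 \subseteq Z_{\A}(B) = \{0\}$, so the \emph{entire} centralizer can be adjoined at once and $(B+C)^{\omega} = B^{\omega}$ follows from a clean lower-central-series computation with no cases. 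Your two key computations check out: $(cc')b = c(c'b) - c'(cb) = 0$ and $b(cc') = (bc)c' + c(bc') = 0$ since $BC + CB \subseteq B^{\omega}$ and $C$ annihilates $B^{\omega}$ on both sides; and $(B+C)^{j+1} = (B+C)B^{j} = B^{j+1} + CB^{j} \subseteq B^{j+1}$ for $j \geq 2$ once $CB^{j} \subseteq B^{\omega}$ is established inductively using $B^{\omega} \vartriangleleft B$. The one fact you use without proof, that $Z_{\A}(B^{\omega})$ is an ideal of $\A$ because $B^{\omega}$ is one (Theorem \ref{omega}), is likewise asserted without proof in the paper and is immediate from the Leibniz identity. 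What your approach buys is the elimination of the $z^2$ case analysis and of the auxiliary subalgebra $K$; what it costs is the inductive scaffolding on the chain length, which the paper replaces by the single preliminary observation that $Z_B(B^{\omega}) \subseteq B^{\omega}$.
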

\begin{proof}
Since $Z_{\A}(B) = \{0\}$ implies $Z(B) = \{0\}$, by Theorem \ref{center} $Z_B(B^{\omega}) \subseteq B^{\omega}$. Suppose $Z_{\A}(B^{\omega}) \subsetneq B^{\omega}$. Then $Z_{\A}(B^{\omega}) \subsetneq B$ since $Z_{\A}(B^{\omega}) \subseteq B$ implies $Z_{\A}(B^{\omega}) = Z_{\A}(B^{\omega})\cap B = Z_B(B^{\omega})\subseteq B^{\omega}$. Since $B^{\omega}$ is an ideal of $\A$, and $Z_{\A}(B)$ is an ideal of 
$\A$ we have $K = Z_{\A}(B^{\omega}) +B$  a subalgebra of $\A$ properly containing $B$. Since $B$ is a subinvariant subalgebra of $\A$, $B$ is a subinvariant subalgebra of $K$. Therefore, $B$ is an ideal of some subinvariant subalgebra $B_1$ of $K$ and $B^{\omega} \subseteq B_1^{\omega}$. Since  $Z_{\A}(B^{\omega}) \subsetneq B^{\omega}$, there exists $z \in  B_1\cap Z_{\A}(B^{\omega})$, and $z \not\in B$. There are four possibilities: Case $(1)$: $z^2 = 0$; Case $(2)$: $z^2 \not\in B$; Case $(3)$: $z^2 \in B\setminus B^{\omega}$; or  Case $(4)$: $z^2 \in B^{\omega}$.

Case $(1)$: If $z^2 = 0$, then  choose $C = {\rm span}\{z, B\}$ which is a subalgebra properly containing $B$ since $z \in B_1$ and  $B \vartriangleleft B_1$. 
Clearly $B^{\omega} \subseteq  C^{\omega}$. Since $z^2 = 0$, $z \in B_1$, and $B \vartriangleleft B_1$, we have $(z + B)(z + B) \subseteq B^2 + (B \cap Z_{\A}(B^{\omega})) = B^2 + Z_{B}(B^{\omega}) \subseteq B^2 + B^{\omega} \subseteq B^2$ which implies that $C^2 = B^2$. Now  $(z + B)(z + B)(z + B) \subseteq (z + B) B^2 \subseteq B^3$ since as shown above $zB \subseteq B^2$ and  by Leibniz identity $zB^2 \subseteq  (zB)B + B(zB) \subseteq B^2B + BB^2 \subseteq B^3$. This implies that $C^3 = B^3$. Since $C^{\omega} = C^t$ for some $t \in \mathbb{Z}_{\geq 2}$, repeating this process we get $C^{\omega} = C^t = B^t = B^{\omega}$. Since $z \in C \cap Z_{\A}(B^{\omega}) = Z_C(C^{\omega})$ and $z \not\in C^{\omega} = B^{\omega} \subset B$, by Theorem \ref{center} we have $Z(C) \neq \{0\}$. Since $Z_{\A}(B) \supseteq Z_C(B) \supseteq Z_C(C) = Z(C)$, we get 
$Z_{\A}(B) \neq \{0\}$ which is a contradiction.

Case $(2)$: If $z^2 \not\in B$, then $z^4 = (z^2)^2 = 0$ since $z^2 \in {\rm Leib}(\A)$ which is an abelian ideal of $\A$. Since $z^2 \in B_1$ and $B \vartriangleleft B_1$, we have $C = {\rm span}\{z^2, B\}$ a subalgebra. Clearly $B^{\omega} \subseteq  C^{\omega}$. Then as in Case $(1)$, we get $B^{\omega} = C^{\omega}$ which leads to a contradiction.

Case $(3)$: In this case $z^2 \in B$, but $z^2 \not\in B^{\omega}$. Since $z \in Z_{\A}(B^{\omega})$ which is an ideal in $\A$, $z^2 \in B \cap  Z_{\A}(B^{\omega}) = Z_B(B^{\omega}) \subseteq B^{\omega}$ which is a contradiction.

Case $(4)$: Suppose $z^2 \in B^{\omega}$. As in case $(1)$ consider the subalgebra $C = {\rm span}\{z, B\}$ properly containing $B$. Since $B^{\omega} \vartriangleleft B$, $(z + B)(z + B) \subseteq B^2 + B^{\omega} \subseteq B^2$. This implies $C^2 = B^2$. Repeating this process as in case $(1)$, we get $C^{\omega} = B^{\omega}$ which leads to a contradiction. 

Hence we have $Z_{\A}(B^{\omega}) \subseteq B^{\omega}$ proving the theorem.
\end{proof}
\bigskip
\section{Characteristic zero case}
In this section all Leibniz algebras are over the field $F$ of characteristic zero. We extend some classic subinvariance results in Lie algebras to Leibniz algebras and show that most of them do not hold over field of finite characteristic. Let $\mathfrak{A}$ be an associative algebra over field $F$. An element $a \in \mathfrak{A}$ is nilpotent if $a^k = 0$ for some positive integer $k$. A subalgebra $\mathfrak{B}$ of the associative Lie algebra $\mathfrak{A}$ is nilpotent if there exist a positive integer $k$ such that every product of $k$ elements in $\mathfrak{B}$ is zero. The radical of $\mathfrak{A}$  denoted by $rad(\mathfrak{A})$ is the maximal nilpotent ideal of $\mathfrak{A}$. For a Lie algebra $L$ of linear transformations  on a finite dimensional vector space over field $F$, we denote $L^*$ to be the associative envelope of $L$. We recall the following interesting result from \cite[page 45]{Jacobson}.

\begin{thm}\cite[Corollary 2.5.2]{Jacobson}\label{associative}
Let $L$ be a Lie algebra of linear transformations on a finite dimensional vector space over field $F$ of characteristic zero. Then $L\cap rad(L^*)$ is the set of all nilpotent elements of $rad(L)$ and  $[rad(L), L] \subseteq rad(L^*)$.
\end{thm}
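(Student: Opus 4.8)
The plan is to reduce to an algebraically closed base field and then extract everything from a composition series of $V$ as an $L$-module. In characteristic zero the solvable radical of a Lie algebra, the radical of a finite-dimensional associative algebra, and the associative envelope all commute with extension of scalars, while an operator is nilpotent over $F$ iff it is over $\bar F$; so it suffices to treat the case $F=\bar F$. Assume this, write $R=rad(L)$, let $N$ be the set of nilpotent elements of $R$, and fix a composition series $0=V_0\subset V_1\subset\cdots\subset V_m=V$ of $V$ as an $L$-module.

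The crucial input is the invariance lemma underlying Lie's theorem: since $R$ is a solvable ideal of $L$ and $\mathrm{char}\,F=0$, on each irreducible quotient $W=V_i/V_{i-1}$ the subalgebra $R$ acts through a single linear functional $\lambda_i\colon R\to F$ (Lie's theorem gives a common eigenvector of $R$ in $W$, its weight space is $L$-stable because $R\vartriangleleft L$, and irreducibility of $W$ forces it to be everything). Two things follow. First, since $r\in R$ acts on $W$ as the scalar $\lambda_i(r)$ it commutes with the action of every $x\in L$, so $[x,r]$ acts as $0$ on $W$; thus every element of $[L,R]$ annihilates each factor $V_i/V_{i-1}$, i.e.\ carries $V_i$ into $V_{i-1}$, and is nilpotent. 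As $[L,R]\subseteq R$, this already yields $[rad(L),L]\subseteq N$. Second, an element $x\in R$ is nilpotent iff every $\lambda_i(x)$ vanishes, so $N=\bigcap_i\ker\lambda_i$ is a subspace; it is a Lie ideal of $L$ because $[L,N]\subseteq[L,R]\subseteq N$, and each of its elements lowers the flag $(V_i)$.

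Next I would establish $N=L\cap rad(L^{*})$. For the inclusion $L\cap rad(L^{*})\subseteq N$: every element of it is a nilpotent transformation (it lies in the nilpotent ideal $rad(L^{*})$), and $L\cap rad(L^{*})$ is a Lie ideal of $L$ which is nilpotent as a Lie algebra, being contained in the nilpotent associative algebra $rad(L^{*})$; hence it lies in $rad(L)=R$, so in $N$. For the reverse inclusion, let $I$ be the two-sided ideal of $L^{*}$ generated by $N$, that is, the span of associative products $\ell_1\cdots\ell_k$ with all $\ell_j\in L$ and at least one $\ell_j\in N$. Since every element of $L$ preserves each $V_i$ while an $N$-factor sends $V_i$ into $V_{i-1}$, each such product sends $V_i$ into $V_{i-1}$; therefore $I^{m}=0$, so $I$ is a nilpotent ideal of $L^{*}$ and $N\subseteq I\subseteq rad(L^{*})$. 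Combining the two inclusions gives $N=L\cap rad(L^{*})$, and together with the previous paragraph $[rad(L),L]\subseteq N\subseteq rad(L^{*})$.

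The main obstacle is the step in the second paragraph: that a solvable ideal acts by scalars on every irreducible subquotient is exactly where characteristic zero cannot be dispensed with — it is the invariance lemma of Lie theory, with its division by integers — and it powers both halves of the conclusion; over a field of positive characteristic it breaks down, just as Lie's theorem does. A secondary technical point is the base-change reduction for the associative radical, but that is standard for finite-dimensional algebras over a perfect field; everything else is routine bookkeeping with the composition series.
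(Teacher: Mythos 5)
The paper gives no proof of this statement; it is quoted verbatim from Jacobson (Corollary 2.5.2), so there is nothing internal to compare against. Your argument is a correct, self-contained reconstruction of essentially Jacobson's own proof: the base-change reductions are all valid in characteristic zero, the invariance lemma correctly yields that $rad(L)$ acts by a weight $\lambda_i$ on each composition factor, and both inclusions ($L\cap rad(L^*)$ is a nilpotent ideal of $L$ consisting of nilpotent operators, hence lies in $N$; the two-sided ideal of $L^*$ generated by $N=\bigcap_i\ker\lambda_i$ strictly lowers the flag and is therefore nilpotent) are established soundly, with the characteristic-zero dependence correctly isolated in the invariance lemma.
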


This leads to the following interesting result for Leibniz algebras over field of characteristic zero.

\begin{thm}\label{radical}
Let $\A$ be a Leibniz algebra over field $F$ of characteristic zero. Then $\A R + R\A \subseteq N$ where $R = rad(\A)$ and $N = nilrad(\A)$.
\end{thm}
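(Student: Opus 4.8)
The plan is to transport the statement into the Lie algebra $L(\A)$ of left multiplications and then apply Theorem~\ref{associative}. Since $\A$ is a left Leibniz algebra one has $L_{xy}=[L_x,L_y]$ for all $x,y\in\A$, so $x\mapsto L_x$ is a surjective homomorphism from $\A$ onto the Lie algebra $L(\A)$ of linear transformations of the finite dimensional space $\A$. Hence $L(R)$, being the image of the solvable ideal $R$ under this surjection, is a solvable ideal of $L(\A)$, so $L(R)\subseteq rad(L(\A))$; and $[L(R),L(\A)]={\rm span}\{[L_r,L_a]\mid r\in R,\ a\in\A\}={\rm span}\{L_{ra}\}=L(R\A)$.

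First I would apply Theorem~\ref{associative} with $L=L(\A)$. The subspace $[rad(L(\A)),L(\A)]$ lies in $rad(L(\A))$ and, by that theorem, also in $rad(L(\A)^{*})$; therefore it lies in $L(\A)\cap rad(L(\A)^{*})$, which Theorem~\ref{associative} identifies with the set of nilpotent transformations belonging to $rad(L(\A))$. Combining with $L(R)\subseteq rad(L(\A))$ from the previous step, we get that every element of $L(R\A)=[L(R),L(\A)]$ is a nilpotent linear transformation of $\A$; equivalently, $L_w$ is nilpotent for every $w\in R\A$.

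Next I would check that $R\A$ is a nilpotent ideal of $\A$. That it is an ideal follows from the left Leibniz identity: $a(ra')=(ar)a'+r(aa')\in R\A$ because $ar\in R$, so $\A(R\A)\subseteq R\A$; and then $(ra)a'=r(aa')-a(ra')\in R\A$, so $(R\A)\A\subseteq R\A$. Since $L_w$ is a nilpotent operator on $\A$ for each $w\in R\A$, it is in particular nilpotent on the subalgebra $R\A$, so by Engel's theorem for Leibniz algebras $R\A$ is nilpotent; being a nilpotent ideal, $R\A\subseteq N$. Finally, for $a\in\A$ and $r\in R$ we have $ar+ra=(a+r)^2-a^2-r^2\in{\rm Leib}(\A)\subseteq N$ while $ra\in R\A\subseteq N$, hence $ar\in N$. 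Thus $\A R\subseteq N$, and therefore $\A R+R\A\subseteq N$.

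I expect the only delicate point to be the transfer step: one must notice that $[L(R),L(\A)]$ is exactly $L(R\A)$, that the inclusion $L(R)\subseteq rad(L(\A))$ suffices (equality is not required, since Theorem~\ref{associative} is applied to the whole of $rad(L(\A))$), and that ``nilpotent element of $rad(L(\A))$'' is meant in the operator sense on $\A$, so the nilpotence restricts to the ideal $R\A$. Once this is in place, verifying that $R\A$ is a nilpotent ideal and reducing $\A R$ to $R\A$ modulo ${\rm Leib}(\A)$ are routine.
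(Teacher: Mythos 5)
Your proof is correct, and its core is the same as the paper's: pass to the Lie algebra $L(\A)$ of left multiplications via $x\mapsto L_x$, note $L(R)\subseteq rad(L(\A))$, and invoke Theorem~\ref{associative} to place $[L(R),L(\A)]=L(R\A)$ inside $rad(L(\A)^{*})$. Where you diverge is in the endgame. The paper exploits that $rad(L(\A)^{*})$ is a nilpotent \emph{associative} ideal, so there is a uniform $n$ with $[L_{y_n},L_{x_n}]\cdots[L_{y_1},L_{x_1}]=0$ for $(y_i,x_i)$ in $R\times\A$ or $\A\times R$; since $[L_y,L_x]=L_{yx}$, this gives $(\A R+R\A)^{n+1}=\{0\}$ in one stroke, treating $\A R$ and $R\A$ symmetrically. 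You instead use only the elementwise nilpotence of each $L_w$, $w\in R\A$, verify that $R\A$ is an ideal, and appeal to Engel's theorem for Leibniz algebras to conclude $R\A$ is nilpotent, then recover $\A R$ separately from $ar+ra\in{\rm Leib}(\A)\subseteq N$. Both routes are sound; the paper's avoids Engel and the ideal check at the cost of the uniform-bound bookkeeping, while yours needs the (standard, and elsewhere used in this paper) Leibniz Engel theorem but makes the asymmetry between $\A R$ and $R\A$ explicit via ${\rm Leib}(\A)$, which is a clean observation.
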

\begin{proof}
Consider the homomorphism $\phi : \A \longrightarrow L(\A)$ defined by $\phi (x) = L_x$ for all $x \in \A$. Then $\phi (R) \subseteq rad(L(\A))$. So by Theorem \ref{associative} we have $[\phi (R) , L(\A)] \subseteq [rad(L(\A) , L(\A)] \subseteq rad((L(\A))^*)$. Hence each element in 
$[\phi (R) , L(\A)]$ is nilpotent in the associative envelope $L(\A)^*$. This implies that there exists positive integer $n$ such that for all $n$-tuple of elements $\{(y_i, x_i) \mid 1 \leq i \leq n\}$ in $R\times \A$ or $\A \times R$, we have $[L_{y_n}, L_{x_n}] \cdots  [L_{y_1}, L_{x_1}] = 0$  which implies $(y_nx_n) \cdots (y_1x_1)x = 0$ for all $x \in \A$. Therefore, we have $(\A R + R\A)^{n+1} = \{0\}$ which implies $\A R + R\A \subseteq N$.

\end{proof}

\begin{cor}\label{nilpotent}  Let $\A$ be a solvable Leibniz algebra over field $F$ of characteristic zero. Then $\A^2$ is nilpotent.
\end{cor}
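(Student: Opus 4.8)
The plan is to read off the result by specializing Theorem \ref{radical} to the case $R = \A$. First I would observe that if $\A$ is solvable then $\A$ is itself a solvable ideal of $\A$, so by maximality of the radical we have $R = \mathrm{rad}(\A) = \A$. Hence Theorem \ref{radical} applies with $R = \A$, and it yields
$$
\A^2 = \A\A \subseteq \A\A + \A\A = \A R + R\A \subseteq N = \mathrm{nilrad}(\A).
$$

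Next I would argue that $\A^2$, being an ideal of $\A$ contained in the nilpotent ideal $N$, is itself nilpotent. This is the standard fact that a subalgebra of a nilpotent Leibniz algebra is nilpotent: from $\A^2 \subseteq N$ one gets by induction $(\A^2)^k = \A^2 (\A^2)^{k-1} \subseteq N N^{k-1} = N^{k}$ for all $k \geq 1$, so if $N^{m} = \{0\}$ then $(\A^2)^{m} = \{0\}$. Therefore $\A^2$ is nilpotent, which is the claim.

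There is essentially no obstacle here once Theorem \ref{radical} is in hand; the only points needing a line of justification are the identification $\mathrm{rad}(\A) = \A$ for solvable $\A$ and the passage from "$\A^2$ sits inside a nilpotent ideal" to "$\A^2$ is nilpotent", both of which are routine. (One could also phrase the whole thing as: solvability forces the left-multiplication image $\phi(\A) = L(\A)$ to be a solvable Lie algebra, so $\A^2$ acts nilpotently on $\A$, but going through Theorem \ref{radical} directly is cleaner since that work has already been done.)
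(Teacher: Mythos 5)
Your proof is correct and follows exactly the paper's route: identify $R=\mathrm{rad}(\A)=\A$ for solvable $\A$, apply Theorem \ref{radical} to get $\A^2\subseteq \mathrm{nilrad}(\A)$, and conclude. The extra justification that a subalgebra of a nilpotent ideal is nilpotent is a detail the paper leaves implicit, but it is routine and correctly handled.
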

\begin{proof} Since $\A$ is solvable, $R = rad(\A) = \A$. Hence by Theorem \ref{radical}, $\A^2 \subseteq nilrad(\A)$. Hence $\A^2$ is nilpotent.

\end{proof}

\begin{cor} Let $\A$ be a Leibniz algebra over field $F$ of characteristic zero. Then $\A^2 \cap R = \A R + R\A$ where $R = rad(\A)$.
\end{cor}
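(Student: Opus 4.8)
The plan is to deduce the equality from a Levi decomposition of $\A$. First I would dispose of the easy inclusion: $\A R$ and $R\A$ are each contained in $\A\A=\A^2$, and since $R$ is an ideal they are also contained in $R$, so $\A R+R\A\subseteq \A^2\cap R$ with no hypothesis on the characteristic. The work lies in the reverse inclusion $\A^2\cap R\subseteq \A R+R\A$.

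For this I would invoke the Levi decomposition for Leibniz algebras over a field of characteristic zero \cite{B}: write $\A=S+R$ with $S$ a subalgebra, $R=rad(\A)$ an ideal, and $S\cap R=\{0\}$. Since $S\cong\A/R$ has trivial Leibniz kernel, $S$ is a semisimple Lie algebra, hence perfect: $SS=S$. Then, using $\A=S+R$,
$$\A^2=(S+R)(S+R)=SS+SR+RS+RR=S+(\A R+R\A),$$
the last step because $SS=S$, $\A R=SR+RR$ and $R\A=RS+RR$. As $\A R+R\A\subseteq R$ and $S\cap R=\{0\}$, this realizes $\A^2$ as the vector-space direct sum $\A^2=S\oplus(\A R+R\A)$. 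Now for $v\in\A^2\cap R$ write $v=s+q$ with $s\in S$ and $q\in\A R+R\A\subseteq R$; then $s=v-q\in S\cap R=\{0\}$, so $v=q\in\A R+R\A$. Together with the first paragraph this gives the claimed equality.

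The only real ingredient is the Leibniz analogue of Levi's theorem, and this is exactly where characteristic zero enters: over a field of positive characteristic no semisimple part need split off and the statement genuinely fails, so I do not expect a serious obstacle beyond correctly invoking that structural input. If one preferred to avoid quoting Levi's theorem, an alternative is to check directly (using the left Leibniz identity) that $Q:=\A R+R\A$ is an ideal of $\A$, pass to $\bar\A=\A/Q$, observe that $\bar R:=(R+Q)/Q$ is central in $\bar\A$ and equals $rad(\bar\A)$, hence also equals $Z^{\ell}(\bar\A)$ (the left center is always abelian, so contained in the radical); then $L(\bar\A)\cong\bar\A/Z^{\ell}(\bar\A)\cong\A/R$ is semisimple, so by Weyl's theorem $\bar\A$ decomposes as $\bar R\oplus W$ with $W$ an $L(\bar\A)$-submodule, and since each $L_{\bar a}$ annihilates the central $\bar R$ and preserves $W$ one gets $\bar\A^2\subseteq W$, whence $\bar\A^2\cap\bar R=\{0\}$, i.e. $\A^2\cap R\subseteq Q$.
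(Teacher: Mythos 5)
Your main argument is essentially the paper's proof: both invoke the Levi decomposition $\A = S + R$ with $S\cap R=\{0\}$, expand $\A^2 = S^2 + (\A R + R\A)$, and use $S\cap R=\{0\}$ to conclude $\A^2\cap R = \A R + R\A$ (the paper cites Barnes's Levi theorem for Leibniz algebras, \cite[Theorem 1]{B1}, for the splitting). Your write-up just makes the easy inclusion and the direct-sum bookkeeping more explicit, and the alternative Weyl-theorem sketch is extra; the core route is the same.
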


\begin{proof}
By Levi's Theorem \cite[Theorem 1]{B1}, $\A = R + S$ where $S$ is a semisimple subalgebra of $\A$ and $S\cap R = \{0\}$. Then $\A^2 = (R + S)(R + S) =\A R + R\A + S^2$. Hence $\A^2 \cap R = \A R + R\A$ since $S^2 \cap R \subseteq S\cap R = \{0\}$.
\end{proof}

\begin{cor}\label{nilrad}  Let $\A$ be a Leibniz algebra over field $F$ of characteristic zero. Then $N = nilrad(R)$ where $R = rad(\A)$, and $N = nilrad(\A)$.
\end{cor}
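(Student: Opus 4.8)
The plan is to reduce the statement to the single identity $\mathrm{nilrad}(\A)=\mathrm{nilrad}(R)$ with $R=rad(\A)$, and to prove it by two opposite inclusions. Write $N=nilrad(\A)$ and $M=nilrad(R)$. The only real input needed is Theorem \ref{radical}, which gives $\A R+R\A\subseteq N$.

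First I would show $N\subseteq M$. Since $N$ is nilpotent it is solvable, and it is an ideal of $\A$, so it is a solvable ideal of $\A$; by maximality of the radical, $N\subseteq R$. Now $N\subseteq R$ together with $\A N+N\A\subseteq N$ forces $RN+NR\subseteq N$, so $N$ is an ideal of $R$, and it is nilpotent, hence $N\subseteq nilrad(R)=M$ by maximality of $M$ inside $R$.

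For the reverse inclusion $M\subseteq N$, note $M=nilrad(R)\subseteq R$, so by Theorem \ref{radical},
\[
\A M+M\A\subseteq \A R+R\A\subseteq N\subseteq M,
\]
where the last inclusion uses the first part of the argument. Thus $M$ is an ideal of $\A$; since $M$ is nilpotent, maximality of the nilradical of $\A$ gives $M\subseteq nilrad(\A)=N$. Combining the two inclusions yields $N=M$, i.e. $nilrad(\A)=nilrad(rad(\A))$.

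The substantive step — and the only place where the characteristic-zero hypothesis enters — is promoting $M$ from being an ideal of $R$ to being an ideal of $\A$; this is exactly what Theorem \ref{radical} provides, and it is crucial that the theorem controls both the left action $\A R$ and the right action $R\A$ so that $M$ comes out as a genuine two-sided ideal of $\A$. Everything else is routine bookkeeping with maximality of radicals and the observation that a nilpotent (or solvable) ideal of $\A$ sitting inside $R$ is automatically an ideal of $R$; I do not expect any further obstacle once Theorem \ref{radical} is available, beyond keeping track of which ambient algebra each "maximal nilpotent ideal" refers to.
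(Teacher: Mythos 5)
Your proof is correct and follows essentially the same route as the paper: both directions rest on the observation that $N$ is a nilpotent ideal of $R$ (giving $N\subseteq nilrad(R)$) and on Theorem \ref{radical} applied to $nilrad(R)\subseteq R$ to promote $nilrad(R)$ to a nilpotent ideal of $\A$ (giving the reverse inclusion). You merely spell out a few maximality steps that the paper leaves implicit.
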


\begin{proof} Since $N$ is a nilpotent ideal of $R$, $N \subseteq nilrad(R)$. By Theorem \ref{radical} $\A (nilrad(R)) + (nilrad(R))\A \subseteq \A R + R\A \subseteq N$ implies that $nilrad(R)$ is a nilpotent ideal of $\A$. Hence $nilrad(R) \subseteq N$ giving $N = nilrad(R)$.

\end{proof}

\begin{thm}\label{solv-sub}
 Let $\A$ be a Leibniz algebra over field $F$ of characteristic zero and $S$ be a solvable subinvariant subalgebra of $\A$. Then $S \subseteq R$ where $R = rad(\A)$. 

\end{thm}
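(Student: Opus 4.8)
The plan is to pass to the quotient $\bar\A=\A/R$ and reduce to the classical Lie-algebra situation. First observe that $\bar\A$ is a semisimple Lie algebra: $\mathrm{rad}(\bar\A)=\{0\}$, and since $\mathrm{Leib}(\bar\A)$ is an abelian---hence solvable---ideal of $\bar\A$, it must vanish, so $\bar\A$ is a Lie algebra of zero radical. Let $\pi\colon\A\to\bar\A$ be the projection, so $\ker\pi=R$. By Theorem~\ref{basic}(4), $\pi(S)$ is subinvariant in $\pi(\A)=\bar\A$, and it is solvable as a homomorphic image of $S$. Thus it suffices to prove that a solvable subinvariant subalgebra of a semisimple Lie algebra is trivial: this gives $\pi(S)=\{0\}$, i.e. $S\subseteq\ker\pi=R$.

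For the reduced statement one may simply quote the Lie-algebra analog of this theorem from \cite{SE} applied to $\bar\A$, whose radical is $\{0\}$; alternatively, one can argue directly by induction on the length $t$ of a normal series $\pi(S)=C_t\vartriangleleft\cdots\vartriangleleft C_0=\bar\A$. For $t\le 1$, $\pi(S)$ is either $\bar\A$ itself or a solvable ideal of $\bar\A$, and in either case lies in $\mathrm{rad}(\bar\A)=\{0\}$. For $t\ge 2$, the ideal $C_1\vartriangleleft\bar\A$ is itself semisimple (a sum of simple ideals of $\bar\A$), and $\pi(S)$ is a solvable subalgebra of $C_1$ subinvariant via a normal series of length $t-1$, so the inductive hypothesis applied inside $C_1$ gives $\pi(S)=\{0\}$.

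The only genuinely non-formal ingredient is the standard fact that an ideal of a semisimple Lie algebra is again semisimple---equivalently $\mathrm{rad}(I)\subseteq\mathrm{rad}(L)$ for $I\vartriangleleft L$---which is where characteristic zero really enters; everything else is bookkeeping with subinvariance chains together with the observation that passing to $\A/R$ also kills $\mathrm{Leib}$. I therefore expect the only point requiring care to be the verification that the reduction to the Lie setting is legitimate, namely the equality $\mathrm{Leib}(\A/R)=\{0\}$ that turns $\A/R$ into an honest semisimple Lie algebra on which Schenkman's theorem (or the chain induction above) applies. If one prefers to stay inside $\A$ and use the tools of Section~2, one can instead induct on the chain length of $S$: in the inductive step $S\subseteq\mathrm{rad}(B_1)$ for the ideal $B_1\vartriangleleft\A$, and since $\mathrm{rad}(B_1)^{\omega}\vartriangleleft\A$ by Theorem~\ref{omega} (hence $\mathrm{rad}(B_1)^{\omega}\subseteq R$), passing to $\A/\mathrm{rad}(B_1)^{\omega}$ reduces matters to a nilpotent ideal of an ideal, whose image in $\A/R$ must vanish by the same semisimplicity fact---so $\mathrm{rad}(B_1)\subseteq R$ and $S\subseteq R$.
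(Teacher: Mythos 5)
Your proof is correct and follows essentially the same route as the paper: pass to $\A/R$, observe that ${\rm Leib}(\A)\subseteq R$ makes $\A/R$ a semisimple Lie algebra, use Theorem~\ref{basic}(4) to transport subinvariance, and invoke the Lie-algebra result of Schenkman \cite[Lemma 4.2]{SE}. The only difference is that you additionally sketch a self-contained induction (via semisimplicity of ideals of semisimple Lie algebras) for the Lie-algebra step that the paper simply cites.
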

\begin{proof}
By Levi's Theorem \cite[Theorem 1]{B1}, $\A/R$ is semisimple. Since ${\rm Leib}(\A) \subseteq R$, $\A/R$ is a Lie algebra and since $S$ is a subinvariant subalgebra of $\A$, $(S+R)/R$ is a subinvariant subalgebra of $\A/R$. Hence by \cite[Lemma 4.2]{SE}, $(S+R)/R \subseteq rad(\A/R) = \{0\}$ which implies $S \subseteq R$. 
\end{proof}

The following example is a particular case of the example due to Jacobson \cite[page 75]{Jacobson} and Seligman \cite[page 163]{S}, (also see \cite[Example 2.4]{Kristen}) for Lie algebras, hence Leibniz algebras over field $F$ of characteristic $p>2$ and shows that the above theorem does not hold over field of finite characteristic.

\begin{ex}\label{char-p}
Let $F$ be a field of characteristic $p>2$. Let $R$ be the commutative associative algebra over $F$ with basis $\{1, a, a^2, \cdots , a^{p-1}\}$ where $a^p = 0$. Then dim$(R) = p$. The maximal nilpotent ideal $N$ of $R$ has basis $\{a, a^2, \cdots , a^{p-1}\}$ and dim$(R/N) = 1$. Consider the simple Lie algebra $L = s\ell(2, F)$. Then $M = L\otimes R$ is a Lie algebra with $[x\otimes a^s, y\otimes a^t] = [x,y]\otimes a^{s+t}$ for all $x,y \in L$ and $0 \leq s, t \leq p-1$. Since $(L\otimes N)^p = L^p\otimes N^p = L\otimes 0 = 0$, $L\otimes N$ is a nilpotent (hence solvable) ideal of $M$. Since $M/(L\otimes N) = (L\otimes R)/(L\otimes N)$ is isomorphic to $L$, $L\otimes N$ is both the radical and nilradical of $M$.  Consider the derivation $\delta = 1\otimes \frac{d}{da}$ in $Der(M)$ defined by $\delta (x\otimes a^t) =t(x\otimes a^{t-1})$ for all $x\in L$ and $1\leq t\leq p-1$.
Consider the semidirect product Lie algebra (hence Leibniz algebra) $P = M \oplus {\rm span}\{\delta\}$ with the multiplication $[x+\delta, y+\delta] = [x, y] + \delta (y) - \delta (x)$ for all $x, y \in M$. The only nontrivial ideal of $P$ is $M$ which is not solvable. Hence 
$rad(P) (= nilrad(P)) = \{0\}$. Since $L\otimes N \vartriangleleft M \vartriangleleft P$, $L\otimes N$ is a nilpotent, hence solvable subinvariant subalgebra of $P$. However, $L\otimes N \subsetneq rad(P) = \{0\}$. This shows that Theorem \ref{solv-sub} as well as \cite[Lemma 4.2]{SE} does not hold over field of finite characteristic.
\end{ex}

\begin{thm}\label{ideal-nil}
 Let $\A$ be a Leibniz algebra over field $F$ of characteristic zero, $B \vartriangleleft \A$ , $R = rad(B)$, and $N = nilrad(B)$.
 Then $R$ and $N$ are ideals of $\A$ and $\A R + R\A \subseteq N$.
\end{thm}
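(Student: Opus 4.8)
The plan is to deduce everything from Theorems~\ref{radical} and~\ref{solv-sub}, rather than argue directly that $R$ and $N$ are characteristic ideals of $B$. The starting observation is that $R = rad(B)$ is a solvable ideal of $B$, and since $B \vartriangleleft \A$, the chain $R \subset B \subset \A$ exhibits $R$ as a subinvariant subalgebra of $\A$. Hence $R$ is a solvable subinvariant subalgebra of $\A$, and Theorem~\ref{solv-sub} gives $R \subseteq rad(\A)$.

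Next I would apply Theorem~\ref{radical} to $\A$ itself: $\A\, rad(\A) + rad(\A)\, \A \subseteq nilrad(\A)$, and since $R \subseteq rad(\A)$ this yields $\A R + R\A \subseteq nilrad(\A)$. Because $R \subseteq B$ and $B \vartriangleleft \A$, we also have $\A R + R\A \subseteq B$, so $\A R + R\A \subseteq nilrad(\A) \cap B$. The set $nilrad(\A)\cap B$ is an ideal of $B$ (the intersection of the ideal $nilrad(\A)$ of $\A$ with the subalgebra $B$) and is nilpotent, being a subalgebra of the nilpotent Leibniz algebra $nilrad(\A)$; therefore $nilrad(\A)\cap B \subseteq nilrad(B) = N$. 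This proves $\A R + R\A \subseteq N$, which is the last assertion of the theorem.

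The two ideal statements then come essentially for free. Since $N = nilrad(B)$ is a solvable ideal of $B$ we have $N \subseteq R$, so $\A R + R\A \subseteq N \subseteq R$; in particular $\A R \subseteq R$ and $R\A \subseteq R$, i.e.\ $R \vartriangleleft \A$. Likewise $\A N + N\A \subseteq \A R + R\A \subseteq N$, so $N \vartriangleleft \A$.

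I do not expect a real obstacle in this argument. The one point requiring care is that $\A$ is in general strictly larger than $B$, so Theorem~\ref{radical} applied to $B$ alone only gives $BR + RB \subseteq N$; the embedding $R \subseteq rad(\A)$ coming from Theorem~\ref{solv-sub} — which uses that $R$ is subinvariant in $\A$, not merely an ideal of $B$ — is precisely what bridges the gap to all of $\A$. After that, the descent from $nilrad(\A)$ back to $nilrad(B)$ uses only the elementary facts that subalgebras of nilpotent Leibniz algebras are nilpotent and that an ideal meets a subalgebra in an ideal of that subalgebra.
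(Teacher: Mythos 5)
Your proof is correct, and it takes a genuinely different and more global route than the paper's. You observe that $R=rad(B)\vartriangleleft B\vartriangleleft\A$ makes $R$ a solvable subinvariant subalgebra of $\A$, pull it into $rad(\A)$ via Theorem~\ref{solv-sub}, and then invoke Theorem~\ref{radical} once, for the ambient algebra $\A$, to get $\A R+R\A\subseteq nilrad(\A)$; intersecting with $B$ lands you in $N$, and both ideal claims drop out of the single inclusion $\A R+R\A\subseteq N\subseteq R$. The paper instead argues locally: for each $x\in\A\setminus B$ it forms $\bar{B}=B+\langle x\rangle$, identifies $R=rad(\bar{B})\cap B$ to conclude $xR+Rx\subseteq R$, and then applies Corollary~\ref{nilpotent} to the solvable algebra $R+\langle x\rangle$ together with Corollary~\ref{nilrad} (in the form $nilrad(rad(B))=nilrad(B)$) to push $xR+Rx$ into $N$. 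Your version is shorter and avoids the element-by-element adjunction of cyclic subalgebras, at the cost of routing everything through the radical and nilradical of the big algebra $\A$; it also proves the slightly stronger containment $\A R+R\A\subseteq nilrad(\A)\cap B$. All the auxiliary results you cite (Theorems~\ref{radical} and~\ref{solv-sub}) are established before this theorem and do not depend on it, so there is no circularity, and each elementary step (subinvariance of $R$ in $\A$, nilpotence of subalgebras of nilpotent algebras, an ideal of $\A$ meeting a subalgebra in an ideal of that subalgebra) checks out.
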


\begin{proof}
If $B = \A$, then result holds by Theorem \ref{ideal}. So assume $B \subsetneq \A$ and choose an element $x \in \A \setminus B$. Let $C = <x>$ be the cyclic Leibniz algebra generated by $x$. Consider the Leibniz algebra $\bar{B} = B + C$ and denote $\bar{R} = rad (\bar{B})$. Since $B$ is an ideal of $\A$, it is also an ideal of $\bar{B}$. Hence $R$ is subinvariant in $\bar{B}$ and $R \subseteq \bar{R}$ by Theorem \ref{solv-sub}. This implies $R \subseteq \bar{R} \cap B$. Since  $\bar{R}$ and $B$ are ideals of $\bar{B}$, we get $ \bar{R} \cap B \subseteq R$. Hence $R = \bar{R} \cap B$ is an ideal of $\bar{B}$ implying that $xR + Rx \subseteq R$ for all $x \in \A \setminus B$. Since $ R \vartriangleleft B$ we get $R$ is an ideal of $\A$.

Since $R$ and $C$ are solvable,  the subalgebra $R_1 = R + C$ is solvable and $R_1^2$ is nilpotent by Corollary \ref{nilpotent}. Furthermore, $R_1R + RR_1 \subseteq R_1^2 \cap R$ and $R_1^2 \cap R$ is a nilpotent ideal of $R$. Hence $R_1R + RR_1 \subseteq nilrad(R)$. Since $nilrad(R) = nilrad(B)$ by Corollary \ref{nilrad} we have $xR + Rx \subseteq nilrad(R)$ for all $x \in \A$. Therefore, $N = nilrad(B) = nilrad(R)$ is an ideal of $\A$ and $\A R + R\A \subseteq N$. 
\end{proof}

\begin{thm}\label{nil-sub} 
Let $\A$ be a Leibniz algebra over field $F$ of characteristic zero and $M$ be a nilpotent subinvariant subalgebra of $\A$. Then $M \subseteq nilrad(\A)$.
\end{thm}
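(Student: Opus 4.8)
The plan is to reduce the statement to the solvable case already handled by Theorem~\ref{solv-sub} together with the structure of the radical provided by Theorem~\ref{radical} and Corollary~\ref{nilrad}. Since $M$ is nilpotent, it is in particular solvable, so Theorem~\ref{solv-sub} immediately gives $M \subseteq R$ where $R = rad(\A)$. Thus it suffices to show that a nilpotent subinvariant subalgebra $M$ of $\A$ that lies inside $R$ is actually contained in $N = nilrad(\A)$. The first step is therefore to pass to the solvable Leibniz algebra $R$: by Theorem~\ref{basic}(3), $M = M \cap R$ is subinvariant in $R$, and by Corollary~\ref{nilrad} we have $N = nilrad(R)$, so the problem is reduced to the case $\A = R$ solvable, with the goal $M \subseteq nilrad(\A)$.

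Next I would use induction on the length $t$ of a normal series $M = M_t \vartriangleleft M_{t-1} \vartriangleleft \cdots \vartriangleleft M_0 = \A$. If $t = 0$ then $M = \A$ is nilpotent and there is nothing to prove. If $t \geq 1$, then $M$ is a nilpotent subinvariant subalgebra of $M_1$ with a shorter series, so by induction $M \subseteq nilrad(M_1)$. It remains to show that $nilrad(M_1) \subseteq nilrad(\A)$, i.e. that $nilrad(M_1)$ is a nilpotent ideal of $\A$. Here $M_1 \vartriangleleft \A$, so Theorem~\ref{ideal-nil} applies directly: with $B = M_1$, $nilrad(B) = nilrad(M_1)$ is an ideal of $\A$, and since it is nilpotent it is contained in $nilrad(\A) = N$. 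Hence $M \subseteq nilrad(M_1) \subseteq N$, completing the induction.

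Finally, assembling the pieces: starting from an arbitrary nilpotent subinvariant $M \subseteq \A$, Theorem~\ref{solv-sub} gives $M \subseteq R$; then $M$ is subinvariant in $R$ by Theorem~\ref{basic}(3); running the induction above inside the solvable algebra $R$ yields $M \subseteq nilrad(R)$; and Corollary~\ref{nilrad} identifies $nilrad(R) = nilrad(\A) = N$. Therefore $M \subseteq N$, as desired.

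The step I expect to be the main obstacle is the inductive step's reliance on $nilrad(M_1) \trianglelefteq \A$; this is exactly the content of Theorem~\ref{ideal-nil}, which itself was proved by a somewhat delicate argument using cyclic Leibniz subalgebras and Corollary~\ref{nilpotent}. One should double-check that the hypotheses of Theorem~\ref{ideal-nil} are met at each stage — in particular that $M_1$ is genuinely an ideal of $\A$ (true by construction of the normal series) and that we remain over a field of characteristic zero throughout (true, since this is the standing assumption of the section). If one prefers to avoid invoking Theorem~\ref{solv-sub} at the outset, an alternative is to run the induction on the series length directly in $\A$ and apply Theorem~\ref{ideal-nil} at each step to lift $nilrad(M_i)$ to an ideal of $\A$; but the cleanest route is the reduction to $R$ described above.
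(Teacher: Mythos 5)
Your proof is correct and its core is the same as the paper's: an induction up the normal series in which Theorem~\ref{ideal-nil} promotes $nilrad(M_1)$ to a nilpotent ideal of the ambient algebra, hence into its nilradical. The preliminary reduction to $R=rad(\A)$ via Theorem~\ref{solv-sub} and Corollary~\ref{nilrad} is harmless but unnecessary, as you yourself note at the end --- the paper simply runs the induction directly in $\A$.
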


\begin{proof}
Since $M$ is a subinvariant subalgebra of $\A$, we have a chain of subalgebras $M=M_t \subset M_{t-1} \subset \cdots \subset M_1 \subset M_0 = \A$ such that $M_i \vartriangleleft M_{i-1}, 1 \leq i \leq t$. Denote $N_i = nilrad(M_{i-1}), 1 \leq i \leq t$ which is a nilpotent ideal of $M_{i-1}$. Since $M = M_t$ is a nilpotent ideal of $M_{t-1}$, $M= M_t \subseteq N_t$. By Theorem \ref{ideal-nil}, $N_t$ is an ideal of $M_{t-2}$. Hence $N_t \subseteq N_{t-1}$ and by Theorem \ref{ideal-nil}, $N_{t-1}$ is an ideal of $M_{t-3}$. Continuing this process we get $M \subseteq N_t \subseteq N_{t-1} \subseteq \cdots \subseteq N_2 \subseteq N_1 = nilrad(M_0) = nilrad(\A)$.
\end{proof}

\begin{rmk} As seen in Example \ref{char-p}, $L\otimes N$ is a nilpotent subinvariant subalgebra of $P$, but $L\otimes N \subsetneq nilrad(P) = \{0\}$. Thus Theorem \ref{nil-sub} does not hold over field of finite characteristic.
\end{rmk}

\begin{thm}
Let $\A$ be a Leibniz algebra over field $F$ of characteristic zero and $B$ be a subinvariant subalgebra of $\A$. Then $rad(B) = B\cap rad(\A)$ and $nilrad(B) = B\cap nilrad(\A)$.
\end{thm}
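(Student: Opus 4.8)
The plan is to establish each of the two equalities by proving the two opposite inclusions separately, with the substantive direction in each case reduced to Theorem~\ref{solv-sub} and Theorem~\ref{nil-sub} respectively.

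First I would dispose of the inclusions $B\cap rad(\A)\subseteq rad(B)$ and $B\cap nilrad(\A)\subseteq nilrad(B)$. Since $rad(\A)\vartriangleleft\A$ and $B$ is a subalgebra of $\A$, the set $B\cap rad(\A)$ is an ideal of $B$; and because it is a subalgebra of the solvable algebra $rad(\A)$ it is itself solvable, as $(B\cap rad(\A))^{(i)}\subseteq rad(\A)^{(i)}$ for every $i$. Thus $B\cap rad(\A)$ is a solvable ideal of $B$, so by maximality of the radical $B\cap rad(\A)\subseteq rad(B)$. Replacing the derived series by the lower central series and solvability by nilpotency, the identical argument shows $B\cap nilrad(\A)$ is a nilpotent ideal of $B$, hence $B\cap nilrad(\A)\subseteq nilrad(B)$.

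For the reverse inclusions I would invoke transitivity of subinvariance. The radical $rad(B)$ is an ideal of $B$, hence subinvariant in $B$; since $B$ is subinvariant in $\A$, Theorem~\ref{basic}(1) makes $rad(B)$ subinvariant in $\A$. As $rad(B)$ is solvable, Theorem~\ref{solv-sub} gives $rad(B)\subseteq rad(\A)$, and together with $rad(B)\subseteq B$ this yields $rad(B)\subseteq B\cap rad(\A)$. In exactly the same way $nilrad(B)$ is a nilpotent ideal of $B$, hence a nilpotent subinvariant subalgebra of $\A$, so Theorem~\ref{nil-sub} gives $nilrad(B)\subseteq nilrad(\A)$ and therefore $nilrad(B)\subseteq B\cap nilrad(\A)$. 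Combining the two inclusions in each case proves $rad(B)=B\cap rad(\A)$ and $nilrad(B)=B\cap nilrad(\A)$.

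Since the genuine content has already been carried by Theorems~\ref{solv-sub} and~\ref{nil-sub}, I do not anticipate a real obstacle here; the only points needing care are the elementary facts that the intersection of $B$ with an ideal of $\A$ is an ideal of $B$, and that a subalgebra of a solvable (resp. nilpotent) Leibniz algebra is again solvable (resp. nilpotent), both immediate from $(B\cap I)^{(i)}\subseteq I^{(i)}$ and $(B\cap I)^{i}\subseteq I^{i}$.
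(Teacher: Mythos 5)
Your proof is correct and follows exactly the paper's argument: the easy inclusions come from $B\cap rad(\A)$ (resp.\ $B\cap nilrad(\A)$) being a solvable (resp.\ nilpotent) ideal of $B$, and the reverse inclusions come from applying Theorem~\ref{solv-sub} (resp.\ Theorem~\ref{nil-sub}) to $rad(B)$ (resp.\ $nilrad(B)$) as a subinvariant subalgebra of $\A$. Your explicit appeal to Theorem~\ref{basic}(1) for transitivity of subinvariance is a detail the paper leaves implicit, but the route is the same.
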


\begin{proof}
Since $B \cap rad(\A)$ (resp. $B \cap nilrad(\A)$) is a solvable (resp. nilpotent) ideal of $B$, we have $B\cap rad(\A) \subseteq rad(B)$ (resp. $B\cap nilrad(\A) \subseteq nilrad(B)$). Also $rad(B)$ (resp. $nilrad(B)$) is a solvable (resp. nilpotent) subinvariant subalgebra of $\A$ implies by Theorem \ref{solv-sub} (resp. Theorem \ref{nil-sub}) that $rad(B)\subseteq B\cap rad(\A)$ (resp. $nilrad(B) \subseteq B\cap nilrad(\A)$). Hence 
$rad(B) = B\cap rad(\A)$ (resp. $nilrad(B) = B\cap nilrad(\A)$).
\end{proof}

\begin{lemma}\label{nil-subin}
Let $\A$ be a Leibniz algebra over field $F$ of characteristic zero and $B, C$ be two nilpotent subinvariant subalgebras of $\A$. Then the subalgebra $D$ generated by $B$ and $C$ is a nilpotent subinvariant subalgebra of $\A$. 
\end{lemma}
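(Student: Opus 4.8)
The plan is to show that $D$ is forced to lie inside the nilradical $N = nilrad(\A)$, after which both conclusions follow almost immediately. First, since $B$ and $C$ are nilpotent subinvariant subalgebras of $\A$, Theorem \ref{nil-sub} gives $B \subseteq N$ and $C \subseteq N$. As $N$ is a subalgebra of $\A$ containing $B \cup C$, it contains the subalgebra $D$ generated by $B$ and $C$; hence $D \subseteq N$.

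Next I would check that $D$ is nilpotent. From $D \subseteq N$ one gets $D^{i} \subseteq N^{i}$ for all $i \in \mathbb{Z}_{\geq 1}$ by an easy induction using bilinearity of the product, and $N^{m} = \{0\}$ for some $m$ because $N$ is a nilpotent ideal of $\A$; therefore $D^{m} = \{0\}$ and $D$ is nilpotent.

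Finally, for subinvariance I would argue by transitivity. Since $N$ is nilpotent, every subalgebra of $N$ is subinvariant in $N$ (this is the characterization of nilpotent Leibniz algebras recalled in the introduction, due to \cite{DMS}); in particular $D$ is subinvariant in $N$. On the other hand $N \vartriangleleft \A$, so $N$ is subinvariant in $\A$. Applying Theorem \ref{basic}(1), which says that a subinvariant subalgebra of a subinvariant subalgebra is again subinvariant, we conclude that $D$ is subinvariant in $\A$.

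The only real content is the appeal to Theorem \ref{nil-sub}: once $D$ is trapped inside $N$, there is essentially no obstacle. If one prefers to avoid invoking the blanket characterization of nilpotent Leibniz algebras, the subinvariance of $D$ in $N$ can be produced concretely via the ascending normalizer chain $D \subseteq N_{\A}(D)\cap N \subseteq \cdots$, which strictly increases until it reaches $N$ because a proper subalgebra of a nilpotent Leibniz algebra is properly contained in its normalizer (the fact already used in the proof of Theorem \ref{cartan}).
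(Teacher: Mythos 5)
Your proposal is correct and follows essentially the same route as the paper: both trap $D$ inside $nilrad(\A)$ via Theorem \ref{nil-sub} and then invoke the characterization that every subalgebra of a nilpotent Leibniz algebra is subinvariant (\cite[Theorem 4.16]{DMS}). You merely spell out the nilpotency of $D$ and the transitivity step that the paper leaves implicit.
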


\begin{proof} Since $B$ and $C$ are nilpotent, by Theorem \ref{nil-sub}, they are contained in $nilrad(\A)$, hence $D \subseteq nilrad(\A)$. So by \cite[Theorem 4.16]{DMS}, $D$ is a subinvariant subalgebra of $nilrad(\A)$. Therefore, $D$ is a nilpotent subinvariant subalgebra of $\A$.
\end{proof}

\begin{thm}\label{subin}
Let $\A$ be a Leibniz algebra over field $F$ of characteristic zero and $B, C$ be two subinvariant subalgebras of $\A$. Then the subalgebra $D$ generated by $B$ and $C$ is a subinvariant subalgebra of $\A$. 
\end{thm}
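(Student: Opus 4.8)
The plan is to reduce to the solvable and the already-handled nilpotent cases by working modulo the radical. First I would set $R = \mathrm{rad}(\A)$. By Theorem \ref{basic}(4) the images $\bar B = (B+R)/R$ and $\bar C = (C+R)/R$ are subinvariant subalgebras of $\A/R$, which is semisimple (Levi) and hence, since $\mathrm{Leib}(\A)\subseteq R$, a Lie algebra. By \cite[Lemma 4.2]{SE} every subinvariant subalgebra of a semisimple Lie algebra is contained in its radical, so $\bar B = \bar C = \{0\}$; that is, $B \subseteq R$ and $C \subseteq R$. Therefore the subalgebra $D = \langle B, C\rangle$ generated by $B$ and $C$ lies inside $R$, and $R$ is a solvable ideal of $\A$.

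Next I would reduce the problem of subinvariance of $D$ in $\A$ to subinvariance of $D$ in $R$: since $R \vartriangleleft \A$, a normal series from $D$ to $R$ extends by one step ($R \vartriangleleft \A$) to a normal series from $D$ to $\A$, so it suffices to prove $D$ is subinvariant in the solvable Leibniz algebra $R$. This is where the solvable-case analogue of Schenkman's argument is needed. The natural approach is induction on $\dim R$ (or on the derived length of $R$). In the inductive step one picks a minimal ideal $I$ of $R$ contained in the last nonzero term $R^{(k)}$ of the derived series; since $R^{(k)}$ is abelian and an ideal, $I$ can be taken abelian. Passing to $R/I$, the images of $B$ and $C$ are subinvariant there (Theorem \ref{basic}(4)), and by induction the subalgebra they generate — which is $(D+I)/I$ — is subinvariant in $R/I$. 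Pulling back, $D + I$ is subinvariant in $R$. It then remains to show $D$ is subinvariant in $D + I$; since $I$ is abelian this should follow because $D+I$ is a small extension of $D$ (one adjoins an abelian ideal), and one checks directly that $D$, being a subalgebra of the solvable — and by Corollary \ref{nilpotent} "$R^2$ nilpotent" — algebra $R$, sits in a short normal chain up to $D+I$; more robustly, one can arrange $I$ so that $D \vartriangleleft D+I$ by choosing $I$ inside a term of the lower central series of $R^2$ (which is nilpotent, so all its subalgebras are subinvariant by \cite{DMS}).

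The main obstacle I expect is precisely this last point: proving that $D$ is subinvariant in $D+I$ after the inductive descent, because adjoining an abelian ideal of the ambient algebra need not make $D$ normal in $D+I$ in general. The cleanest way around it is to exploit Corollary \ref{nilpotent}: $R^2$ is nilpotent, hence by \cite[result cited via DMS]{DMS} every subalgebra of $R^2$ is subinvariant in $R^2$, and $R^2 \vartriangleleft R$. So one should first handle $D \cap R^2$, show $D = (D\cap R^2) + (\text{a subalgebra mapping isomorphically to its image in } R/R^2)$, and use that $R/R^2$ is abelian so \emph{every} subalgebra there is an ideal. Concretely: the image $\bar D$ of $D$ in the abelian algebra $R/R^2$ is an ideal, so $D + R^2 \vartriangleleft R$; within $D + R^2$ one has $D \cap R^2 \vartriangleleft R^2$ only up to subinvariance, but combining the subinvariance of $D\cap R^2$ in the nilpotent $R^2$ with the normality of $D + R^2$ in $R$ yields a normal chain from $D$ to $\A$. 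Assembling these chains — first $D$ up to $D\cap R^2$ inside $R^2$ via nilpotency, then up to $D + R^2$, then up to $R$, then up to $\A$ — completes the proof, and I would write the argument as an induction that carries exactly this bookkeeping.
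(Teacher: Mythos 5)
Your proof breaks at the very first step. You invoke \cite[Lemma 4.2]{SE} to conclude that $(B+R)/R$ and $(C+R)/R$ vanish in the semisimple quotient $\A/R$, but that lemma (as used in Theorem \ref{solv-sub}) applies only to \emph{solvable} subinvariant subalgebras; $B$ and $C$ here are arbitrary. A subinvariant subalgebra of a semisimple Lie algebra need not be contained in the radical $\{0\}$ --- indeed every simple ideal of a semisimple Lie algebra is subinvariant and nonzero, and if $\A$ itself is semisimple then $B=\A$ is a subinvariant subalgebra not contained in $R=\{0\}$. So the reduction to $D\subseteq R$ is false, and everything downstream (the induction inside the solvable radical) addresses only a special case of the theorem. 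Even within that special case, the step you yourself flag as the obstacle --- showing $D$ is subinvariant in $D+I$, or in $D+R^2$ --- is not actually resolved: normality of $D+R^2$ in $R$ together with subinvariance of $D\cap R^2$ in $R^2$ does not produce a normal chain starting at $D$, since you still need a chain from $D$ up to $D+R^2$, and $D\cap R^2$ sits below $D$, not above it.

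The missing idea is to quotient not by $\mathrm{rad}(\A)$ but by the ideal $K$ of $\A$ generated by $B^{\omega}$ and $C^{\omega}$. By Theorem \ref{omega} these are ideals of $\A$ because $B$ and $C$ are subinvariant, so $K\vartriangleleft\A$, and the images $(B+K)/K$ and $(C+K)/K$ are \emph{nilpotent} subinvariant subalgebras of $\A/K$. The nilpotent case is then handled by Lemma \ref{nil-subin}: both images lie in $\mathrm{nilrad}(\A/K)$ by Theorem \ref{nil-sub}, hence so does the subalgebra they generate, which is therefore subinvariant by \cite[Theorem 4.16]{DMS}. Since $D\supseteq K$, subinvariance of $(D+K)/K$ in $\A/K$ pulls back to subinvariance of $D$ in $\A$. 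This is where the characteristic-zero hypothesis genuinely enters (via Theorem \ref{nil-sub}), not through a containment of $B$ and $C$ in the radical.
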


\begin{proof} 
If both $B$ and $C$ are nilpotent then the result follows by Lemma \ref{nil-subin}.
Suppose at least one of the subalgebras $B,  C$ is not nilpotent. Then $B^{\omega} \neq \{0\}$ and/or  $C^{\omega} \neq \{0\}$. Since $B$ and $C$ are subinvariant in $\A$, by Theorem \ref{omega} $B^{\omega}$ and $C^{\omega}$ are ideals of $\A$. Hence the subalgebra $K$ generated by 
$\{B^{\omega}, C^{\omega}\}$ is an ideal of $\A$ and quotient algebras $(B+K)/K$ and $(C+K)/K$ are nilpotent in $\A/K$. Hence by Lemma \ref{nil-subin}, $(D+K)/K$ is subinvariant in $\A/K$. Since $D$ contains $K$, $D$ is subinvariant in $\A$.

\end{proof}

\begin{thm}\label{sub-sub}
Let $\A$ be a Leibniz algebra over field $F$ of characteristic zero and $B$ be a subinvariant subalgebra of $\A$. Let $c \in \A$ and $\bar{B}$ be the smallest subalgebra containing $B$ such that $f(c, \bar{B}) \subseteq \bar{B}$. Then $\bar{B}$ is a subinvariant subalgebra of $\A$.
\end{thm}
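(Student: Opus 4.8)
The plan is to pass to the quotient $\A/B^{\omega}$, where the statement reduces to the already-known fact that subalgebras of nilpotent Leibniz algebras are subinvariant. Since $B$ is subinvariant, Theorem~\ref{omega} gives $B^{\omega}\vartriangleleft\A$; let $\pi:\A\longrightarrow\bar{\A}:=\A/B^{\omega}$ be the quotient map and set $\bar c=\pi(c)$. Because $\pi(B)\cong B/B^{\omega}$ is a finite dimensional Leibniz algebra with $(B/B^{\omega})^{\omega}=\{0\}$, it is nilpotent, and by Theorem~\ref{basic}(4) it is subinvariant in $\bar{\A}$. Hence Theorem~\ref{nil-sub} yields $\pi(B)\subseteq\bar N:=nilrad(\bar{\A})$.

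The key step is then to show $\pi(\bar B)\subseteq\bar N$. I would argue that $\pi^{-1}(\bar N)$ is a subalgebra of $\A$ that contains $B$ (because $\pi(B)\subseteq\bar N$) and satisfies $f(c,\pi^{-1}(\bar N))\subseteq\pi^{-1}(\bar N)$, since $\bar N$ is an ideal of $\bar{\A}$ and so $\bar c\,\bar N+\bar N\,\bar c\subseteq\bar N$. By the minimality of $\bar B$ among subalgebras containing $B$ that are closed under $f(c,-)$, this forces $\bar B\subseteq\pi^{-1}(\bar N)$, hence $\pi(\bar B)\subseteq\bar N$. Now $\pi(\bar B)$ is a subalgebra of the nilpotent Leibniz algebra $\bar N$, so it is subinvariant in $\bar N$ by \cite[Theorem 4.16]{DMS}; and $\bar N\vartriangleleft\bar{\A}$, so $\bar N$ is subinvariant in $\bar{\A}$. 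Transitivity of subinvariance (Theorem~\ref{basic}(1)) then gives that $\pi(\bar B)$ is subinvariant in $\bar{\A}$.

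Finally I would lift the conclusion back to $\A$: fix a normal series $\pi(\bar B)=D_t\vartriangleleft D_{t-1}\vartriangleleft\cdots\vartriangleleft D_0=\bar{\A}$, and apply $\pi^{-1}$ to obtain subalgebras of $\A$ with $\pi^{-1}(D_i)\vartriangleleft\pi^{-1}(D_{i-1})$ and $\pi^{-1}(D_0)=\A$. Since $B^{\omega}=\ker\pi\subseteq B\subseteq\bar B$, we have $\pi^{-1}(\pi(\bar B))=\bar B$, so this is a normal series from $\bar B$ to $\A$, proving $\bar B$ subinvariant in $\A$. The only real content here is the observation that, modulo $B^{\omega}$, the subalgebra $\bar B$ is pinned inside the nilradical; the remaining obstacles are mild bookkeeping, namely checking that the minimality property defining $\bar B$ transfers correctly through $\pi$ and that taking $\pi^{-1}$ of the normal series recovers exactly $\bar B$, both of which rest on the containment $B^{\omega}\subseteq\bar B$.
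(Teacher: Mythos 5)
Your proof is correct and follows essentially the same route as the paper: pass to $\A/B^{\omega}$ (legitimate since Theorem~\ref{omega} makes $B^{\omega}$ an ideal of $\A$), observe that the image of $B$, and hence of $\bar{B}$, lands in the nilradical of the quotient, invoke \cite[Theorem 4.16]{DMS} to get subinvariance there, and pull the normal series back using $B^{\omega}\subseteq B\subseteq\bar{B}$. The only (welcome) difference is that you deduce $\bar{B}\subseteq\pi^{-1}(nilrad(\A/B^{\omega}))$ directly from the minimality defining $\bar{B}$, whereas the paper first identifies $\bar{B}$ with the explicit sum $\sum_{i\geq 0}f^{i}(c,B)$ and runs an induction on $i$; your version also treats the nilpotent and non-nilpotent cases uniformly instead of splitting them.
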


\begin{proof}
If $c \in B$ then $\bar{B} = B$ and there is nothing to prove. So assume that $c \nin B$. Recall that $f^0(c, B) = B, f^{i+1}(c, B) = f(c, f^i(c, B))$ for all $i \in \mathbb{Z}_{i \geq 0}$. Consider the subalgebra $C = \sum_{i \geq 0}f^i(c, B)$. By definition  $B = f^0(c, B)  \subseteq C$. Since $cf^i(c, B) + f^i(c, B)c = f(c, f^i(c, B)) = f^{i+1}(c, B)$, we have $c C + Cc \subseteq C$ which implies $\bar{B} \subseteq C$. Also by definition $B \subseteq \bar{B}$ and $f(c, B) \subseteq f(c, \bar{B}) \subseteq \bar{B}$. Assume $f^i(c, B) \subseteq \bar{B}$. Then $f^{i+1}(c, B) = f(c, f^i(c, B)) \subseteq f(c, \bar{B}) \subseteq \bar{B}$. So by induction $f^i(c, B) \subseteq \bar{B}$ for all $i \geq 0$ which implies $C = \bar{B}$.

If $B$ is nilpotent, then $B \subseteq nilrad(\A)$. Since $nilrad(\A)$ is an ideal of $\A$, we have $f(c, B) = cB +Bc \subseteq nilrad(\A)$. Suppose $f^i(c, B) \subseteq nilrad(\A)$. Then $f^{i+1}(c, B) = f(c, f^i(c, B)) = cf^i(c, B) + f^i(c, B)c \subseteq nilrad(\A)$. So by induction $\bar{B} = \sum_{i \geq 0}f^i(c, B) \subseteq nilrad(\A)$. Hence by \cite[Theorem 4.16]{DMS}, $\bar{B}$ is a subinvariant subalgebra in $nilrad(\A)$, hence in $\A$.

Suppose $B$ is not nilpotent.Then by Theorem \ref{omega}, $B^{\omega} \neq \{0\}$ is an ideal of $\A$. Note that $B_1 = B/B^{\omega}$ is a nilpotent subalgebra of $\A_1 = \A/B^{\omega}$. Define the subalgebra $\bar{B}_1 = \bar{B}/B^{\omega}$. Then as before $\bar{B}_1 = \sum_{i \geq 0}((f^i(c, B) + B^{\omega})/B^{\omega} = \sum_{i \geq 0} f^i(c_1, B_1) \subseteq \A_1$ where $c_1 = c+B^{\omega} \in \A_1$. Hence $\bar{B}_1$ is a subinvariant subalgebra of $\A_1$ which implies $\bar{B}$ is a subinvariant subalgebra of $\A$ since $\bar{B} \supseteq B^{\omega}$.
\end{proof}

\begin{cor}\label{minimal}
Let $\A$ be a Leibniz algebra over field $F$ of characteristic zero and $B$ be a subalgebra. Let $\underline{B}$ be the subalgebra of $B$ generated by all subinvariant subalgebras of $\A$ contained in $B$. Then $\underline{B}$ is an ideal of $B$.
\end{cor}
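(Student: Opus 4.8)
The plan is first to observe that $\underline{B}$ is itself a subinvariant subalgebra of $\A$, so that it becomes the \emph{largest} subinvariant subalgebra of $\A$ contained in $B$; then to use Theorem \ref{sub-sub} to show that $\underline{B}$ absorbs left and right multiplication by every element of $B$.

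First I would argue that $\underline{B}$ is subinvariant in $\A$. Since $\A$ is finite dimensional, so is $\underline{B}$; choose a basis $v_1, \dots, v_m$ of $\underline{B}$. Each $v_j$ is built from finitely many elements of $\bigcup_{S} S$ (the union being over all subinvariant subalgebras $S$ of $\A$ contained in $B$), so $v_j$ already lies in the subalgebra generated by finitely many such $S$; taking the union over $j$, there is a finite collection $S_1, \dots, S_k$ of subinvariant subalgebras of $\A$ contained in $B$ with $\underline{B} = \langle S_1, \dots, S_k\rangle$. Applying Theorem \ref{subin} repeatedly — $\langle S_1, S_2\rangle$ is subinvariant in $\A$, then $\langle\langle S_1, S_2\rangle, S_3\rangle$ is subinvariant in $\A$, and so on — we conclude that $\underline{B}$ is subinvariant in $\A$. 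Since $\underline{B} \subseteq B$, the definition of $\underline{B}$ now shows that \emph{every} subinvariant subalgebra of $\A$ contained in $B$ is contained in $\underline{B}$; that is, $\underline{B}$ is the unique maximal such subalgebra.

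Next, fix $b \in B$ and apply Theorem \ref{sub-sub} with the subinvariant subalgebra $\underline{B}$ in place of $B$ and the element $c = b$: let $\overline{\underline{B}}$ be the smallest subalgebra containing $\underline{B}$ with $f(b, \overline{\underline{B}}) \subseteq \overline{\underline{B}}$. By Theorem \ref{sub-sub}, $\overline{\underline{B}}$ is subinvariant in $\A$. On the other hand $B$ is itself a subalgebra containing $\underline{B}$, and since $b \in B$ we have $f(b, B) = bB + Bb \subseteq B$; by minimality of $\overline{\underline{B}}$ this forces $\overline{\underline{B}} \subseteq B$. Thus $\overline{\underline{B}}$ is a subinvariant subalgebra of $\A$ contained in $B$, so by the maximality established above $\overline{\underline{B}} \subseteq \underline{B}$, whence $\overline{\underline{B}} = \underline{B}$. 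Therefore $b\underline{B} + \underline{B}b = f(b, \underline{B}) \subseteq \overline{\underline{B}} = \underline{B}$. As $b \in B$ was arbitrary, $\underline{B} \vartriangleleft B$.

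I do not anticipate a serious obstacle; the only points requiring care are the reduction to finitely many generators in the first step (so that Theorem \ref{subin} may be iterated) and the correct matching of the data to the hypotheses of Theorem \ref{sub-sub} — in particular noting that $B$ serves as a witness subalgebra that is closed under $f(b,-)$, which is what pins $\overline{\underline{B}}$ down inside $B$.
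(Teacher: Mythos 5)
Your proof is correct and follows essentially the same route as the paper's: apply Theorem \ref{sub-sub} to $\underline{B}$ with $c=b\in B$, observe that the resulting minimal $f(b,-)$-closed subalgebra sits inside $B$ and hence, by maximality of $\underline{B}$, equals $\underline{B}$. You additionally justify (via reduction to finitely many generators and iteration of Theorem \ref{subin}) the claim that $\underline{B}$ is itself subinvariant, a point the paper asserts only ``by definition''; this is a worthwhile detail but not a different argument.
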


\begin{proof} By definition $\underline{B}$ is a subinvariant subalgebra of $\A$ and $\underline{B} \subseteq B$. Let $c \in B$ and $B_1$ be the minimal subalgebra of $\A$ such that $\underline{B} \subseteq B_1$ and $f(c, B_1) \subseteq B_1$. Then by Theorem \ref{sub-sub}, $B_1$ is a subinvariant subalgebra of $\A$. By definition $B_1 \subseteq B$, hence $B_1 \subseteq \underline{B}$ which implies $B_1 = \underline{B}$. Therefore, for all $c \in B$, we have $c \underline{B} + \underline{B} c = f(c, \underline{B}) = f(c, B_1) \subseteq B_1 = \underline{B}$. Hence 
$\underline{B}$ is an ideal of $B$.
\end{proof}

The following example shows that in Corollary \ref{minimal}, the subalgebra $\underline{B}$ is not necessarily an ideal of $\A$.

\begin{ex} Let $\A = {\rm span}\{x, y, z, t\}$ be  the Leibniz algebra over field $\mathbb{C}$ with nonzero multiplications $tx = x, ty = y, zy = x$. Then $B = {\rm span}\{z, t\}$ is an abelian self-normalizing subalgebra (hence a Cartan subalgebra) of $\A$. The subalgebra ${\rm span}\{z\}$ is a subinvariant subalgebra $\A$ since we have a chain of ideals ${\rm span}\{z\} \vartriangleleft {\rm span}\{z, x\} \vartriangleleft {\rm span}\{z, x, y\} \vartriangleleft \A$. Since $B$ is self-normalizing, $\underline{B} \neq B$. Hence $\underline{B} = {\rm span}\{z\}$ which is clearly an ideal of $B$, but not an ideal of $\A$.

\end{ex}
\section{Remark on Tower Theorem}
In the fundamental work on subinvariance by Wielandt \cite{W} for groups and by Schenkman \cite{SE} for Lie algebras, there is an interesting result known as tower theorem. Let us recall the tower theorem for Lie algebras. Let $\g$ be a finite dimensional Lie algebra over field $F$ of characteristic zero. Assume that $\g$ has center zero. Then $\g$ is isomorphic to the ideal $\{ad_x \mid x \in \g\}$ in $Der(\g)$. Thus there is a natural embedding of $\g \subset Der(\g)$. Define $\g_0 = \g, \g_1 = Der(\g)$ and $\g_i = Der(\g_{i-1})$ for all $i \in \mathbb{Z}_{\geq 1}$. It is shown in \cite[Theorem 17]{SE} that there is a natural embedding of $\g_{i-1}$ in $\g_i$ and the tower of Lie algebras $\{\g = \g_0 \subset \g_1 \subset \g_2 \subset \cdots \}$ terminates in a limit Lie algebra $\g_{\infty}$ which is a complete Lie algebra and ${\rm dim}(\g_{\infty}) \leq (d+c)$, where $d = {\rm dim}(Der(\g^{\omega}))$ and $c$ is the dimension of the center of $\g^{\omega}$. Later Carles \cite{C} gave a description of each member $\g_i$ in the tower including the limit $\g_{\infty}$ and extended the tower theorem for Lie algebras with nontrivial center.

 Now let us consider the case of a Leibniz algebra $\A$ over field $F$ of characteristic zero. Assume that the left center $Z^{\ell}(\A) = \{0\}$. Then 
 $\A$ is isomorphic to $L(\A) = \{L_x \mid x \in \A\} \subset Der(\A)$. Thus we have a natural embedding of $\A$ in $Der(\A)$. Now define $\A_0 = \A, \A_1 = Der(\A)$ and $\A_i = Der(\A_{i-1}$ for all $i \in \mathbb{Z}_{\geq 1}$. Since $Leib(\A) \subseteq Z^{\ell}(\A)$, we have $Leib(\A) = \{0\}$ which implies that $\A$ is indeed a Lie algebra and the tower theorem for Leibniz algebras follows by \cite[Theorem 17]{SE} for Lie algebra. Observe that by definition of complete Leibniz algebra in \cite{Kristen}, since the limit Leibniz algebra $\A_{\infty}$ is complete as a Lie algebra, it will be also complete as a Leibniz algebra. It would be an interesting problem to investigate whether there is any Leibniz algebra analog of Carles' \cite{C} Lie algebra results when $Z^{\ell}(\A) \neq \{0\}$ and $Leib(\A)\neq \{0\}$.

\end{document}